\crefname{equation}{}{f}
\numberwithin{equation}{section}
\newcommand{\mylabel}[2]{#2\def\@currentlabel{#2}\label{#1}}
\declaretheorem[style=plain,numberwithin=section]{theorem}
\declaretheorem[style=plain,numberlike=theorem]{lemma, proposition, corollary}
\declaretheorem[style=definition,numberwithin=section]{definition}
\declaretheorem[style=remark,numberlike=definition]{remark,example}
\declaretheorem[name=Theorem A,numbered=no,style=plain]{theoremA}
\declaretheorem[name=Theorem B,numbered=no,style=plain]{theoremB}
\newtheorem*{theorem*}{Theorem}
\newcommand{\myref}[1]{\textnormal{\ref{#1}}}
\newcommand{\N}{\mathbb{N}}
\newcommand{\R}{\mathbb{R}}
\newcommand{\measspace}{\mathscr{M}}
\newcommand{\eps}{\varepsilon}
\newcommand{\hdm}{{\mathscr H}}
\newcommand{\one}{\mathbf{1}}
\newcommand{\dd}{\mathop{}\mathopen{}\mathrm{d}}
\newcommand{\defeq}{\stackrel{\text{def}}{=}}
\DeclareMathOperator{\spn}{span}
\DeclareMathOperator{\entropy}{Ent}
\DeclareMathOperator{\diagonal}{diag}
\DeclareMathOperator{\domain}{dom}
\DeclareMathOperator{\interior}{int}
\DeclareMathOperator{\F}{\mathrm{F}}
\DeclarePairedDelimiter\abs{\lvert}{\rvert}
\DeclarePairedDelimiter\norm{\lVert}{\rVert}
\DeclarePairedDelimiterX\set[2]{\{}{\}}{#1\,\delimsize\vert\,#2}
\DeclarePairedDelimiterX\inp[2]{\langle}{\rangle}%
{#1\,\delimsize\vert\,\mathopen{}#2}
\title{Convergence rates for regularized unbalanced optimal transport: the discrete case}
\author{Luca Nenna\thanks{Universit\'e Paris-Saclay, CNRS, Laboratoire de math\'ematiques d'Orsay, ParMA, Inria Saclay, 91405, Orsay, France. 
email: \texttt{luca.nenna@universite-paris-saclay.fr}} \thanks{Institut Universitaire de France (IUF).} ,\; Paul Pegon\thanks{CEREMADE, Université Paris--Dauphine, Université PSL, CNRS \& MOKAPLAN, Inria Paris, 75016 Paris, France. Email: \texttt{pegon@ceremade.dauphine.fr}} \; and Louis Tocquec\thanks{Universit\'e Paris-Saclay, CNRS, Laboratoire de math\'ematiques d'Orsay \& MOKAPLAN, Inria Paris, 75016 Paris, France \& ParMA, Inria Saclay, 91405, Orsay, France. Email: \texttt{louis.tocquec@universite-paris-saclay.fr}}}
\date{2025}
\begin{document}
\maketitle
\begin{abstract}
Unbalanced optimal transport (UOT) is a natural extension of optimal transport (OT) allowing comparison between measures of different masses. It arises naturally in machine learning by offering a robustness against outliers. The aim of this work is to provide convergence rates of the regularized transport plans and potentials towards their original solution when both measures are weighted sums of Dirac masses. 
\end{abstract} 

\vskip\baselineskip\noindent
\textit{Keywords.} optimal transport, unbalanced optimal transport, entropic regularization, convex analysis, Csiszàr divergence, asymptotic analysis.\\
\textit{2020 Mathematics Subject Classification.}  Primary: 49Q22 ; Secondary: 49N15, 94A17.


\tableofcontents

\section*{Notations}

The following notations we will be used throughout the article.
\begin{itemize}
\item \(\measspace^+(\mathcal{X})\) is the set of finite positive measures defined on \(\mathcal{X}\).
\item \(\inp{\cdot}{\cdot}\) is the inner product associated with the Euclidean norm \(\norm{\cdot}\) in \(\R^d\).
\item \(\preceq\) stands for a component-wise inequality between vectors in \(\R^{\mathcal{X}}\).
\item \(\lesssim\) stands for \(\leq\) up to a multiplicative universal positive constant. 
\item \(\Delta_d\) is the \((d\!-\!1)\)-dimensional simplex of \(\R^d\) defined by
    \[\Delta_d \defeq \set*{z \in (\R_+)^{d}}{\sum_{i=1}^d z_i = 1}.\]
\item \(\sqcup\) stands for a disjoint union of sets.
\item $\norm{\cdot}_A$ is the norm given by $\norm{x}_A = \sqrt{\inp{Ax}{x}}$ where $A$ is a positive definite self-adjoint operator.
\item $\domain f$ denotes the domain $\{x : f(x) < +\infty\}$ of a convex function valued in $\R\cup\{+\infty\}$.
\end{itemize}

\

\section{Introduction}

Given two positive measures \(\mu\) and \(\nu\) on compact domains \(\mathcal X \subseteq \R^d\) with same total mass and \(\mathcal Y\subseteq \R^d\) and a cost function \(c\!:\mathcal X\!\times\!\mathcal Y\to\R_+\), the Monge-Kantorovich optimal transport (OT) problem consists in
\begin{equation}
\label{intro:ot}
\inf_{\gamma \in \Gamma(\mu , \nu)} \int_{\mathcal{X}\times\mathcal{Y}}c(x,y)\dd\gamma(x,y) \end{equation}
where \(\Gamma(\mu,\nu)\) denotes the set of all probability measures on the product space \(\mathcal X\!\times\!\mathcal Y\) having \(\mu\) and \(\nu\) as marginals.
A popular way to solve numerically this problem consists in adding an entropy regularizing term, tuned via a non-negative parameter \(\varepsilon\), which leads to the following \emph{entropic optimal transport} problem
\begin{equation}
\label{intro:entropic}
\inf_{\gamma \in \Gamma(\mu , \nu)} \int_{\mathcal{X}\times\mathcal{Y}}c(x,y)\dd\gamma(x,y) + \varepsilon\entropy(\gamma \lvert \mu\! \otimes\! \nu), \end{equation}
where \(\entropy\) is the relative entropy.

Over the last decade this kind of approach has witnessed an impressive increasing interest since it has proved to be an efficient way to approximate OT problems. From the computational point of view, \labelcref{intro:entropic} can be solved by an iterative projections method, which turns out to correspond to the celebrated Sinkhorn's algorithm \cite{sinkhorn1967diagonal,sinkhornRelationshipArbitraryPositive1964}, successfully developed in the pioneering works \cite{cuturi2013sinkhorn,Galichon-Entropic,benamouIterativeBregmanProjections2015}. The simplicity of the implementation and the good convergence guarantees \cite{franklinScalingMultidimensionalMatrices1989, marinoOptimalTransportApproach2020} determined the success of entropic optimal transport for a wide range of applications, see for instance \cite{peyre2019computational} and the references therein.
Notice now if we look at \labelcref{intro:entropic} as a perturbation of \labelcref{intro:ot}, then it is quite natural to study the behavior of both optimal values and minimizers, when the regularization parameter varies, and in particular when \(\varepsilon\to 0\).
According to this let us mention some related works \cite{mikami1990variational,leonard2012schrodinger,leonardSurveySchrodingerProblem2014,carlierConvergenceEntropicSchemes2017}, which established \(\Gamma\)-convergence results of \labelcref{intro:entropic} to \labelcref{intro:ot}, \cite{pal2019difference,conforti2021formula,altschuler2022asymptotics,ecksteinConvergenceRatesRegularized2022,carlier2023convergence,malamut2025convergence,nenna2024convergence}, which provided asymptotic expansions for the cost. Here we want to focus on the convergence rate of the primal and dual variable in the spirit of \cite{cominetti1994asymptotic,weed2018explicit} in the discrete setting for a generalized OT problem: unbalanced OT.

By definition, classical optimal transport requires the two given measures to have the same total mass, which can be a limitation for certain practical applications, particularly in machine learning, or imaging. An extension of the classical framework, known as unbalanced optimal transport, addresses this limitation by relaxing the marginal constraints and incorporating a divergence term into the objective function to penalise the marginals. It was simultaneously introduced by \cite{chizat2018unbalanced,liero2018optimal,mons2016} and may be expressed in the following form, 
\begin{equation}\label{pb:uot}
 \boxed{\inf\limits_{\gamma \in \measspace^+(\mathcal{X}\times \mathcal{Y})} \int_{\mathcal{X}\times\mathcal{Y}}c(x,y)\dd\gamma(x,y) + \mathrm{D}_\phi(\gamma_1 \lvert \mu)+\mathrm{D}_\phi(\gamma_2 \lvert \nu)}, 
\end{equation}
where \(\gamma_i\) is the \(i\)-th marginal of the coupling \(\gamma\) and \(\mathrm{D}_\phi\) is a \(\phi\)-divergence, also known as Csiszàr divergence (see \Cref{Csiszar_divergence discrete}).

As in the usual optimal transport problem, one can introduce an entropy regularized version of the unbalanced optimal transport problem, depending on a parameter $\varepsilon >0$, as follows
\begin{equation}\label{pb:entro_uot}
 \boxed{\inf_{\gamma \in \measspace^+(\mathcal{X}\times \mathcal{Y})}  \int_{\mathcal{X}\times\mathcal{Y}}c(x,y)\dd\gamma(x,y) + \mathrm{D}_\phi(\gamma_1 \lvert \mu)+ \mathrm{D}_\phi(\gamma_2 \lvert \nu)+\varepsilon\entropy(\gamma|\mu\!\otimes\!\nu)},
\end{equation}

This paper focuses on this problem when both measures have finite support. The aim is to establish convergence rates of the regularized optimal variables, for the primal and dual problems, towards the original ones. The main contributions can be summarized in the following two theorems.
\begin{theoremA}[{Simplified statement of \Cref{main theorem}}]
For positive \(\varepsilon\), let \(\varepsilon\mapsto\gamma(\varepsilon)\) be the curve of optimal solutions to the regularized unbalanced optimal transport problem \labelcref{pb:entro_uot}.
Under the assumption that \(\mu\) and \(\nu\) are atomic measures, and under certain conditions on $\phi$, the trajectory \(\varepsilon\mapsto \gamma(\varepsilon)\) converges towards an optimizer \(\bar\gamma\) of \labelcref{pb:uot} at a rate of at least \(\sqrt{\varepsilon}\), i.e.
        \[\lVert \bar\gamma - \gamma(\varepsilon) \rVert \lesssim \sqrt{\varepsilon}.\]
\end{theoremA}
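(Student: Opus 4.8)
The plan is to compress the comparison between the regularized problem \eqref{pb:entro_uot} and the original one \eqref{pb:uot} into a single variational inequality, and then to upgrade an \emph{energy} gap of order $1/t$ into a \emph{metric} gap of order $1/\sqrt t$ by a second-order growth estimate. Since $\mu,\nu$ are atomic everything lives in finite dimensions: write $F(\gamma)\defeq\int_{\mathcal{X}\times\mathcal{Y}}c\dd\gamma+\mathrm{D}_\phi(\gamma_1|\mu)+\mathrm{D}_\phi(\gamma_2|\nu)$ for the objective of \eqref{pb:uot}, $H(\gamma)\defeq\entropy(\gamma|\mu\otimes\nu)$, and $F_t\defeq tF+H$ for the objective of \eqref{pb:entro_uot}. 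Two elementary facts will be used: $H\ge 0$ on $\measspace^+(\mathcal{X}\times\mathcal{Y})$, and $H(\gamma^*)<+\infty$ because any minimizer $\gamma^*$ of \eqref{pb:uot} is supported in $\supp\mu\times\supp\nu$ — restricting a candidate to this set increases neither $\int c\dd\gamma$ nor the two $\phi$-divergences. I would first recall (or reprove by the usual $\Gamma$-convergence and selection argument) that $\{\gamma(t)\}_{t\ge 1}$ stays in a fixed compact set, that every cluster point minimizes $F$, and among those minimizes $H$; under the non-degeneracy hypotheses of \Cref{main theorem} this selected minimizer $\gamma^*$ is unique, whence $\gamma(t)\to\gamma^*$.

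The first step is an energy estimate. From $F_t(\gamma(t))\le F_t(\gamma^*)$ and $H\ge0$ one gets at once
\[
0\ \le\ F(\gamma(t))-F(\gamma^*)\ \le\ \frac1t\bigl(H(\gamma^*)-H(\gamma(t))\bigr)\ \le\ \frac{H(\gamma^*)}{t},
\]
so $\gamma(t)$ is $O(1/t)$-suboptimal for the unregularized problem. This step is cheap and robust, and it is the only place where the special role of $\gamma^*$ and the sign of $H$ enter.

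The second, crucial step converts energy into distance. The heart of the matter is a quadratic-growth inequality for $F$ at $\gamma^*$: under the hypotheses of \Cref{main theorem} there are $\alpha>0$ and a neighborhood $U$ of $\gamma^*$ with $F(\gamma)-F(\gamma^*)\ge\alpha\norm{\gamma-\gamma^*}^2$ for all feasible $\gamma\in U$. Combining this (valid at $\gamma(t)$ for $t$ large, the finitely many remaining values of $t$ being absorbed into the constant by compactness) with the energy estimate gives $\alpha\norm{\gamma(t)-\gamma^*}^2\le H(\gamma^*)/t$, i.e. $\norm{\gamma(t)-\gamma^*}\lesssim 1/\sqrt t$. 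To obtain the growth inequality I would split a feasible increment $\gamma-\gamma^*$ into the part that moves the marginals and the part that keeps them fixed: since the optimal marginals are positive, $\mathrm{D}_\phi(\cdot|\mu)$ and $\mathrm{D}_\phi(\cdot|\nu)$ are smooth and strictly convex there, so the first part contributes genuine quadratic growth; on the slice of plans with the optimal marginals, $F$ equals $\int c\dd\gamma$ up to an additive constant, and non-degeneracy of the supporting transport linear program makes it grow at least linearly — hence at least quadratically on the bounded set $U$ — away from $\gamma^*$. A compactness argument over unit feasible directions glues the two regimes.

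The main obstacle is precisely this quadratic-growth condition: identifying which assumptions (uniqueness of the UOT minimizer, positivity of its marginals, a strict-complementarity / Hoffman-type condition on the supporting linear program) make it hold, and dealing with the degenerate case where the set of UOT minimizers is not a singleton. In that case the marginal-preserving directions carry no quadratic growth, and one must instead invoke the finer entropic selection in the spirit of Cominetti--San Mart\'in, whose faster (essentially exponential) rate still leaves the $1/\sqrt t$ coming from the marginals as the genuine bottleneck. The dual rate announced in the companion statement would then be deduced from this primal estimate through the first-order optimality conditions of \eqref{pb:entro_uot}.
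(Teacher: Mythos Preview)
Your first step, the $O(1/t)$ energy estimate $F(\gamma(t))-F(\gamma^*)\le (H(\gamma^*)-H(\gamma(t)))/t$, is correct and is essentially the content of the paper's \Cref{convergence_primal_optimizer}. The approach then diverges sharply from the paper's, and the divergence is where the gap lies.

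The paper does \emph{not} rely on any quadratic growth of the UOT functional. Its route is dual: it first proves the $1/t$ rate for $\xi(t)$ by showing that $d(t)=t(\xi(t)-\xi^*)$ stays bounded and converges (this is the Cominetti--San Mart\'in mechanism, adapted to the nonlinear term $\F^*$), and only then deduces the primal rate by reading the \labelcref{ODE} satisfied by $\dot\xi(t)$ as the optimality condition of an auxiliary quadratic functional, comparing with a well-chosen competitor, and integrating the resulting bound $\lvert\dot\gamma_{x,y}(t)\rvert\lesssim t^{-3/2}$. In particular the logic is dual~$\Rightarrow$~primal, the opposite of what you propose in your last paragraph; since the dual rate is the sharper one, going primal~$\Rightarrow$~dual would lose information.

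Your second step does not go through under the paper's hypotheses \textnormal{\cref{H1}}--\textnormal{\cref{H6}}. Two independent obstructions:
\begin{itemize}
\item The assumptions give strong convexity of $\F^*$, not of $\F$. Your claim that ``$\mathrm{D}_\phi(\cdot|\mu)$ and $\mathrm{D}_\phi(\cdot|\nu)$ are smooth and strictly convex there, so the first part contributes genuine quadratic growth'' is therefore not justified in general; \textnormal{\cref{H5}} makes $\F$ smooth, not strongly convex.
\item More seriously, nothing in \textnormal{\cref{H1}}--\textnormal{\cref{H6}} forces the UOT minimizer to be unique; the paper explicitly works with the \emph{minimal-entropy} solution $\gamma^*$ among possibly many optimizers. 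Along the optimal face the UOT functional is constant, so the quadratic growth inequality you need is simply false. You recognize this and gesture towards ``the finer entropic selection in the spirit of Cominetti--San Mart\'in'', but that selection mechanism is precisely what the paper's dual ODE analysis \emph{is}; it is not an external lemma one can bolt onto a purely primal argument. The splitting into marginal-preserving and marginal-changing directions, with a compactness argument over unit directions, does not survive this degeneracy.
\end{itemize}
In short, the energy estimate is the easy half; the hard half is handled in the paper by a genuinely dual argument that your proposal does not reproduce.
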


\begin{theoremB}[{Simplified statement of \Cref{main theorem 2})}]
For positive \(\varepsilon\), let  \(\varepsilon\mapsto\xi(\varepsilon)\) be the curve of optimal solutions to the regularized dual problem to \labelcref{pb:entro_uot}.
Under the assumption that \(\mu\) and \(\nu\) are atomic measures, and under certain conditions on $\phi$, the dual trajectory \(\varepsilon\mapsto \xi(\varepsilon)\) converges towards the dual optimizer \(\bar\xi\) at a rate of at least \(\varepsilon\), i.e.
        \[\lVert \bar\xi - \xi(\varepsilon) \rVert \lesssim \varepsilon.\]
\end{theoremB}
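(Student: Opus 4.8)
The plan is to exploit the fact that the regularized dual problem is a smooth, strictly concave perturbation of the (non-smooth) unregularized dual, and to run a standard "second-order stability of maximizers" argument. First I would write down explicitly the dual of \labelcref{pb:entro_uot}: by Fenchel--Rockafellar duality it is of the form $\max_{\xi} \Phi(\xi) - \frac{1}{t}\sum_{ij} \exp\!\big(t(\xi_i + \eta_j - c_{ij})\big)\,\mu_i\nu_j$ where $\Phi$ is the (concave, piecewise-affine after the $\phi$-divergence Legendre transforms) unregularized dual functional and $\xi = (\xi_i,\eta_j)$ collects the two potential vectors. The key structural point, to be established first, is that the entropic term $R_t(\xi) \defeq \frac1t\sum_{ij}\exp(t(\xi_i+\eta_j-c_{ij}))\mu_i\nu_j$ is a $\mathcal C^\infty$, convex function whose gradient is exactly the regularized marginal defect, so that $\xi(t)$ is characterized by $\nabla\Phi(\xi(t)) = \nabla R_t(\xi(t))$ in the appropriate subdifferential sense, while the limit potential $\xi^*$ satisfies $0 \in \partial(-\Phi)(\xi^*)$.

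Next I would localize: since $\mu,\nu$ are atomic, everything lives in finite dimension, and the optimal support of $\gamma^*$ is identified by a complementary-slackness/active-set condition on $\xi^*$. Under the "certain conditions" alluded to in the statement — which I expect to be a strict complementarity plus a nondegeneracy (positive-definiteness of a reduced Hessian) hypothesis on the limit problem, in the spirit of Cominetti--San~Martín and Weed — the unregularized dual $\Phi$ behaves near $\xi^*$ like a strongly concave quadratic along the relevant subspace, after removing the directions killed by the invariances/active constraints. I would then estimate $\|\xi(t)-\xi^*\|$ by testing optimality: $\Phi(\xi(t)) - R_t(\xi(t)) \ge \Phi(\xi^*) - R_t(\xi^*)$, combined with the reverse inequality $\Phi(\xi^*) \ge \Phi(\xi(t))$ (up to the first-order term, using $0\in\partial(-\Phi)(\xi^*)$), to get a bound of the type $\tfrac{\kappa}{2}\|\xi(t)-\xi^*\|^2 \lesssim R_t(\xi^*) - R_t(\xi(t)) + (\text{cross terms})$. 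The crucial quantitative input is the size of $R_t$ at the limit: on the active pairs $\xi^*_i+\eta^*_j - c_{ij} = 0$ so $\exp(t\cdot 0)/t = 1/t$, giving $R_t(\xi^*) = \tfrac1t|\{\text{active pairs}\}|\cdot(\text{masses}) + o(1/t)$, while on inactive pairs the exponential is super-exponentially small; hence $R_t(\xi^*) \lesssim 1/t$. Plugging this in yields $\|\xi(t)-\xi^*\|^2 \lesssim 1/t$ naively, i.e. rate $1/\sqrt t$ — so to reach the claimed $1/t$ one must do better: I would instead differentiate the optimality condition $\nabla\Phi(\xi(t)) = \nabla R_t(\xi(t))$ and note that $\nabla R_t(\xi^*)$ has components of order $1/t\cdot(\text{mass})$ on active constraints and exponentially small elsewhere, so $\nabla\Phi(\xi(t)) = O(1/t)$; then invertibility of the reduced Hessian of $\Phi$ at $\xi^*$ (the nondegeneracy assumption) gives $\|\xi(t)-\xi^*\| \lesssim \|\nabla\Phi(\xi(t)) - \nabla\Phi(\xi^*)\| = O(1/t)$ directly, via an implicit-function-theorem / Lipschitz-inverse argument.

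The main obstacle, and where I expect the bulk of the technical work to go, is the handling of the invariances and the non-strict-concavity of $\Phi$: the unregularized dual is typically constant along certain directions (and $\Phi$ is only piecewise affine from the $\phi$-divergence transforms unless $\phi$ is strictly convex), so $\xi^*$ need not be unique and the "reduced Hessian" is only positive-definite after quotienting by the kernel and restricting to the face of active constraints. Making this rigorous requires (i) showing that $\xi(t)$ stays in a fixed compact neighborhood of the solution set — done via coercivity of the regularized functional and an a~priori bound — (ii) identifying, for $t$ large, the correct face (the active set stabilizes, by strict complementarity), and (iii) a careful inverse-function argument on that face where the map $\xi \mapsto \nabla\Phi(\xi)$ is a bi-Lipschitz homeomorphism onto its image near $\xi^*$. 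Once the active set is frozen and the problem is restricted to the right subspace, the estimate $\|\xi(t)-\xi^*\|\lesssim 1/t$ follows from the linear bound $\|\nabla R_t(\xi^*)\|\lesssim 1/t$ and the local Lipschitz estimate for the inverse of $\nabla\Phi$; the exponential smallness of $R_t$ off the active set guarantees that the contributions we discard are genuinely negligible compared to $1/t$, which is what separates this rate from the slower primal rate $1/\sqrt t$ of Theorem A.
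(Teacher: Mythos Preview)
Your upgrade from the naive $1/\sqrt t$ estimate to $1/t$ rests on the claim that $\nabla R_t(\xi^*)$ has components of order $1/t$ on the active set. This is false: differentiating $R_t(\xi) = \tfrac1t\sum_{ij} e^{t(\xi_i+\eta_j-c_{ij})}\mu_i\nu_j$ kills the prefactor $1/t$ via the chain rule, so on an active pair $(i,j)$ the gradient contributes $e^{0}\mu_i\nu_j = O(1)$, not $O(1/t)$. In fact $\nabla R_t(\xi(t)) = A\gamma(t)$ exactly, and the limit optimality condition reads $\nabla\Phi(\xi^*)=A\gamma^*$, so your implicit-function step amounts to showing $\|A\gamma(t)-A\gamma^*\| = O(1/t)$ --- which is essentially the (marginal version of the) primal rate and is not available at this stage. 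Your argument is therefore circular, and the ``Lipschitz inverse of $\nabla\Phi$'' step does not deliver the rate. You also misidentify the structure of $\Phi$: under the paper's hypotheses (H5)--(H6), $\F^*$ is $\mathcal C^2$ and locally strongly convex, so $\Phi=-\F^*(-\cdot)$ is smooth and strongly concave --- not piecewise affine --- and the nondegeneracy you posit as an extra assumption is already built in.

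The paper's proof proceeds quite differently. The decisive idea is to change variables to $d(t)\defeq t(\xi(t)-\xi^*)$ and prove directly that $d(t)$ is bounded. Two observations make this work. First, for $(x,y)\in I_0$ one has $\langle A^*d(t),e_{x,y}\rangle=\log\gamma_{x,y}(t)\to\log\gamma_{x,y}^*$, so the projection $d^{(0)}(t)$ of $d(t)$ onto $E_0=\spn\{Ae_{x,y}:(x,y)\in I_0\}$ is bounded a priori. Second, one compares $\tilde{\mathscr K}_t(d(t))\le\tilde{\mathscr K}_t(d^{(0)}(t))$: the exponential terms over $I_0$ coincide for $d(t)$ and $d^{(0)}(t)$, and the first-order Taylor terms of $\F^*$ at $-\xi^*$ cancel because $D\F^*(-\xi^*)\in E_0$ (Lemma~4.1). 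Strong convexity of $\F^*$ (H5) then gives a lower bound $\tfrac{\lambda_0}{2t^2}\|d(t)\|^2$, while the right-hand side is $O(t^{-2})$ plus exponentially small terms from $(x,y)\notin I_0$. Hence $\|d(t)\|$ is bounded, i.e.\ $\|\xi(t)-\xi^*\|\lesssim 1/t$. The projection trick onto $E_0$ and the lemma $D\F^*(-\xi^*)\in E_0$ are the missing ingredients in your outline; without them one does not escape the $1/\sqrt t$ barrier you yourself identified.
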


\paragraph{Outline of the paper.} 
The paper is organized as follows: in \Cref{sec:properties} we introduce the discrete balanced and unbalanced optimal transport problems, their dual problems, and their regularized counterparts. In particular we focus on the dual unbalanced problem and give some preliminary results. \Cref{sec:convergence} is devoted to the convergence of both the primal and the dual trajectory of solutions to the regularized unbalanced OT. In \Cref{sec:asymptotics} we provide the announced asymptotic rates of the regularized problem when the regularization parameter tends to \(0\). \Cref{sec:numerics} is devoted to numerical experiments and comparisons with the established asymptotics.

\section{Discrete unbalanced optimal transport }

\subsection{The setting}

Since we study the case of measures \(\mu,\nu\) with finite support, we assume from now on that \(\mathcal{X}, \mathcal{Y}\) are finite sets. Notice that measures \(\mu \in \measspace^+(\mathcal X)\),  \(\nu \in \measspace^+(\mathcal Y)\) and \(\gamma \in \measspace^+(\mathcal X\times \mathcal Y)\) can now be written as
\[\mu=\sum_{x\in\mathcal X}\mu_x\delta_x, \quad \nu=\sum_{y\in\mathcal Y}\nu_y\delta_y,\quad\text{and}\quad \gamma =\! \sum_{(x,y) \in \mathcal X \!\times\! \mathcal Y} \! \gamma_{x,y} \delta_{(x,y)},\]
and can thus be identified with their respective weight vectors \((\mu_x)_x\in \R^{\mathcal X}\), \((\nu_y)_y \in \R^{\mathcal Y}\) and \((\gamma_{x,y})_{(x,y)} \in \R^{\mathcal X\times \mathcal Y}\). In the same way, the cost \(c\) can be identified with the vector \((c(x,y))_{(x,y)} \in \R^{\mathcal X\times \mathcal Y}\). By introducing the linear map \(A\!: \R^{\mathcal{X}\times \mathcal{Y}} \to \R^{\mathcal{X} } \! \oplus \mathbb{R}^{\mathcal{Y}} \) defined by
\[A\gamma  = \left( \left(\sum_{y\in\mathcal{Y}} \gamma_{x,y} \right)_{x\in\mathcal{X}} , \left(\sum_{x\in\mathcal{X}} \gamma_{x,y} \right)_{y\in\mathcal{Y}} \right),\]
one can reformulate the marginal constraint as \(A\gamma = (\mu,\nu) \defeq q\) and the optimal transport problem and its regularized formulation can then be expressed as finite-dimensional problems as follows,
\begin{equation}
\label{OT_0}
    \inf_{\gamma \in(\R_+)^{\mathcal{X}\!\times\!\mathcal{Y}}}  \left\{ \inp{c}{\gamma} \: \vert \: A\gamma = q \right\} \tag{\(\text{OT}_0\)},
\end{equation}
and for any non-negative \(\varepsilon\),
\begin{equation}
\label{OT_epsilon}
    \inf_{\gamma \in \R^{\mathcal{X}\!\times\!\mathcal{Y}}} \left\{ \inp{c}{\gamma} + \varepsilon \entropy(\gamma) \: \vert \: A\gamma = q \right\}, \tag{\(\text{OT}_\varepsilon\)}
\end{equation}
where \(\inp{c}{\gamma} \defeq \sum_{(x,y)\in \mathcal X\!\times\!\mathcal Y}c(x,y)\gamma_{x,y}\) denotes the canonical inner product on \(\R^{\mathcal X\!\times\! \mathcal Y}\) and \(\entropy\!:\R^{\mathcal{X}\!\times\!\mathcal{Y}}\!\to\!\R\) is the discrete entropy, i.e. the entropy relative to the counting measure $\mathcal H^0$ defined by 
\[
\entropy(\gamma) \defeq \entropy(\gamma \vert \mathcal H^0 ) = \begin{dcases*}
\sum_{(x,y)\in\mathcal{X}\times\mathcal{Y}} \gamma_{x,y} (\log \gamma_{x,y} - 1)& if \(\gamma_{x,y} \geq 0\;\text{for every }(x,y)\)\\
 +\infty& otherwise.
\end{dcases*}
\]
We take the convention \(0 \log 0 = 0\) providing a continuous extension of \(t \mapsto t \log t\) on \(\R_+\).

\begin{remark}
Notice that, contrary to the problem stated in the general case in \labelcref{intro:entropic}, we took the entropy with respect to $\hdm^0$ instead of $\mu\otimes \nu$. However in the case where \(\mu\in\mathcal{P}(\mathcal{X)}\), \(\nu\in\mathcal{P}(\mathcal{Y)}\) and $A \gamma = (\mu,\nu)$, they satisfy the following relation:
\[\entropy(\gamma \:\rvert\:\mu\otimes \nu) = \entropy(\gamma)-\entropy(\mu) -\entropy(\nu)-2.\]
\end{remark}


\begin{remark}
Assuming that \(\mathcal{X}\) and \(\mathcal{Y}\) are finite subsets of \(\R^d\), say \(\mathcal{X}=\{x_1,\cdots,x_N\}\) and \(\mathcal{Y}=\{y_1,\cdots,y_M\}\), the linear map \(A\) can be associated with its matrix \(A_{mat}\) in the bases \( \mathcal{B}=(e_{x_1,y_1} , e_{x_1,y_2} , \cdots , e_{x_n,y_M}) \) and \( \mathcal{B}'=(e_{x_1} , \cdots , e_{x_n} , e_{y_1} , \cdots e_{y_M}) \) defined as
\[A_{mat} =
\begin{array}{cc} 
\begin{pmatrix}
\mathbb{1}_M & 0_M & \cdots & 0_M &\quad  \textnormal{I}_M \quad \\
0_M & \mathbb{1}_M & \cdots & 0_M & \textnormal{I}_M \\
\vdots & \vdots & \ddots & \vdots &\quad \vdots \quad \\
0_M & 0_M & \cdots & \mathbb{1}_M &\quad \textnormal{I}_M\quad  \\
\end{pmatrix}
\end{array}^T.\]
\end{remark}

\begin{proposition}
\label{link EOT dual}
For any \(\varepsilon>0\), \labelcref{OT_epsilon} and its dual admit a unique solution \(\gamma_\varepsilon\) and \(\xi_\varepsilon\) (up to the kernel of \(A^*\), which has dimension \(1\), for \(\xi_\varepsilon\)), respectively. Moreover,  the following relation holds
\[\gamma_\varepsilon = e^{\frac{1}{\varepsilon}(A^*\xi_\varepsilon - c )}.\]
\end{proposition}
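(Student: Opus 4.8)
The plan is to obtain the primal statements by a compactness-plus-strict-convexity argument and to read off the dual problem, its solvability and the exponential relation from Fenchel--Rockafellar duality and the attached optimality conditions. First, note that we may assume all weights $\mu_x,\nu_y$ to be positive (otherwise discard the null atoms) and $\sum_x\mu_x=\sum_y\nu_y$, since otherwise the feasible set $\{\gamma\succeq0:A\gamma=q\}$ is empty and nothing is claimed. That set is then nonempty — it contains $\gamma^0:=(\mu_x\nu_y/m)_{x,y}$ with $m:=\sum_x\mu_x$ — closed, and bounded, because $A\gamma=q$ forces $0\le\gamma_{x,y}\le\min(\mu_x,\nu_y)$; hence compact. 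On it the objective $\gamma\mapsto t\langle c|\gamma\rangle+\entropy(\gamma)$ is continuous (convention $0\log0=0$) and strictly convex (each summand $\gamma_{x,y}\mapsto\gamma_{x,y}\log\gamma_{x,y}$ is strictly convex on $\R_+$), so Weierstrass gives a minimizer $\gamma_t$ and strict convexity makes it unique.

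For the dual, I would write the problem as $\inf_\gamma f(\gamma)+g(A\gamma)$ with $f:=t\langle c|\cdot\rangle+\entropy$ (equal to $+\infty$ off $(\R_+)^{\mathcal X\times\mathcal Y}$) and $g:=\iota_{\{q\}}$ the convex indicator of $\{q\}$, and check the qualification $\gamma^0\in\mathrm{ri}(\domain f)=(\R_{>0})^{\mathcal X\times\mathcal Y}$ with $A\gamma^0=q\in\mathrm{ri}(\domain g)$. Fenchel--Rockafellar then yields no duality gap and attainment of the dual supremum. Since $g^*(-\xi)=-\langle\xi|q\rangle$ and, using that the Legendre conjugate of $s\mapsto s\log s-s$ is $s\mapsto e^s$, $f^*(\eta)=\sum_{x,y}e^{\eta_{x,y}-tc(x,y)}$, the dual problem is
\[\sup_{\xi\in\R^{\mathcal X}\oplus\R^{\mathcal Y}}\ \langle\xi|q\rangle-\sum_{x,y}e^{(A^*\xi)_{x,y}-tc(x,y)},\]
which after the cosmetic substitution $\xi\leftarrow t\xi$ and division by $t$ becomes the normalized dual in which the stated identity is phrased.

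The exponential relation follows from the Fenchel--Rockafellar optimality condition, namely that a feasible $\gamma$ and a dual $\xi$ are jointly optimal iff $A^*\xi\in\partial f(\gamma)$, equivalently $\gamma\in\partial f^*(A^*\xi)=\{\nabla f^*(A^*\xi)\}$ (a singleton, $f^*$ being smooth); as $\nabla f^*(\eta)_{x,y}=e^{\eta_{x,y}-tc(x,y)}$ this gives $\gamma_t=e^{t(A^*\xi_t-c)}$ in the normalized variable and, incidentally, re-proves $\gamma_t>0$. For the uniqueness of $\xi_t$ up to $\ker A^*$, I would compute $(A^*(a,b))_{x,y}=a_x+b_y$, so $\ker A^*=\{(\alpha\mathbf 1_{\mathcal X},-\alpha\mathbf 1_{\mathcal Y}):\alpha\in\R\}$ is one-dimensional; the dual objective is invariant under translations by $\ker A^*$ (the exponential term because $A^*$ annihilates these vectors, the linear term because $\sum_x\mu_x=\sum_y\nu_y$), which already precludes sharper uniqueness, while on the quotient by $\ker A^*$ the dual objective is strictly concave — $\bar\xi\mapsto A^*\xi$ being injective there and $\eta\mapsto\sum_{x,y}e^{\eta_{x,y}-tc(x,y)}$ strictly convex — hence has at most one maximizer.

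The step needing the most care is the verification of the Fenchel--Rockafellar qualification, i.e. exhibiting a strictly positive feasible plan, which is precisely where positivity of the weights and the mass balance enter, together with the bookkeeping of the factor $t$ between the bare dual and the normalized one in the statement; everything else is routine convex analysis. A self-contained alternative would establish dual attainment directly from coercivity of the dual objective modulo $\ker A^*$, but this coercivity is somewhat more delicate to check than simply invoking the duality theorem.
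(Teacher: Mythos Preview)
Your proof is correct. The paper does not actually prove this proposition, noting only that it is well known in the literature (citing Peyr\'e--Cuturi); your Fenchel--Rockafellar argument with the pair $(f,g)=(t\langle c|\cdot\rangle+\entropy,\iota_{\{q\}})$, together with the explicit qualification via the product coupling $\gamma^0$, the factor-$t$ normalization, and the computation of $\ker A^*$, is precisely the standard route implicit in that reference and is carried out carefully.
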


We will not detail the proof since it is well known in the literature (for instance it is a by-product of the result in \cite[Proposition~4.4]{peyre2019computational}).

\indent The unbalanced optimal transport problem is a natural extension of classical optimal transport problem where one can consider measures of different masses by relaxing the mass conservation constraint; for more details we refer the reader to the seminal works \cite{chizat2018unbalanced,chizat2018interpolating,liero2018optimal,mons2016}. This approach, in particular, allows the creation or destruction of mass. To formalize this problem, we introduce a general function \(\F\!:\R^\mathcal{X}\!\oplus\R^\mathcal{Y}\!\to\!\R \cup\{+\infty\}\) which is used as a penalization on the marginals of the transport coupling. The unbalanced optimal transport problem then takes the form:
\begin{equation}
    \label{UOT}
    \boxed{\inf_{\gamma\in(\R_+)^{\mathcal{X}\!\times\!\mathcal{Y}}} \left\{ \inp{c}{\gamma} + \F(A\gamma)  \right \}}. \tag{\(\text{UOT}_0\)}
\end{equation}

In the same way as before, we consider a regularized version of this optimization problem which is given by the minimization of $\mathscr{C}_\varepsilon$ defined for every $\varepsilon >0$ by
\[\mathscr{C}_\varepsilon(\gamma) \defeq \inp{c}{\gamma} + \F(A\gamma) + \varepsilon\entropy(\gamma),\]
i.e. we consider
\begin{equation}
    \label{UOT_epsilon}
    \boxed{\inf_{\gamma\in \R^{\mathcal{X}\!\times\!\mathcal{Y}}} \left\{ \inp{c}{\gamma} + \F(A\gamma) + \varepsilon \entropy(\gamma)   \right \} }.\tag{\(\text{UOT}_\varepsilon\)}
\end{equation}

Throughout all this paper we will use the following assumptions for \(\F\).
\begin{enumerate}[(H$_1$),start=0]
    \item \label{H0}\(\F\) is a proper convex and lower semi-continuous function.
    \item[\mylabel{Hdom}{(H$_{\domain}$)}] There exists \((\mu,\nu) \in (\R_+^*)^{\mathcal X}\! \oplus (\R_+^*)^{\mathcal Y}\) such that  \(\F(\mu,\nu) < +\infty\).
\end{enumerate}

In the literature, for instance \cite{sejourne2022faster}, \(\F\) is often taken to be a Csiszàr \(\phi\)-divergence defined as follows.
\begin{definition}
\label{Csiszar_divergence discrete} (Discrete Csiszàr \(\phi\)-divergence) \\
Let \(q\in(\R_+)^N\) be a non-negative vector and \(\phi\!:\R \to \R_+ \cup \{+\infty\}\) an entropy function meaning that it is a convex and lower semi-continuous function such that $\domain \phi \subseteq \R_+$. The discrete Csiszàr \(\phi\)-divergence is defined on $\R^N$ by
\begin{equation*}
    \mathrm{D}_\phi(p \: | \: q) \defeq \sum_{i\,:\,q_i>0}q_i\phi\bigg(\frac{p_i}{q_i}\bigg)+\phi'_\infty\sum_{i\,:\,q_i=0}p_i,
\end{equation*}
where \(\phi_\infty'\defeq\lim_{t\to\infty}\phi(t)/t\) is the recession constant at infinity.
\end{definition}
\begin{remark}
The choice of considering a general function \(\F\) instead of a Csiszàr divergence is motivated by the fact that one may also want to consider  variational problems given by the sum of an optimal transport term and a congestion on the marginals, which are not Csiszàr divergences. All assumptions that we make on $\F$ can easily be translated as assumptions on the entropy function \(\phi\) when \(\F(\cdot)=\mathrm{D}_\phi(\cdot \vert q)\) and $(\mu, \nu) = q \succ 0$. In that case \myref{H0} and \myref{Hdom} are equivalent to the following:
\begin{enumerate}[(H$'_1$),start=0]
\item $\phi$ is a proper convex lower semi-continuous functions;
\item[\mylabel{Hdom'}{(H$'_{\domain}$)}] there exists $\alpha > 0$ such that $\phi(\alpha) < +\infty$.
\end{enumerate}
\end{remark}

\subsection{Properties of the regularized unbalanced optimal transport}\label{sec:properties}
Let us start by introducing the dual formulations of the unbalanced OT problem and its regularization. We will use the adjoint operator of \(A\) defined for any \(\xi=(\varphi , \psi ) \in \R^{\mathcal{X}}\!\oplus \R^{\mathcal{Y}} \) by
\[A^*\xi = \varphi \oplus \psi = (\varphi_x + \psi_y)_{(x,y)\in\mathcal{X}\!\times\!\mathcal{Y}}.\]
The dual problems of \labelcref{UOT} and \labelcref{UOT_epsilon}
are given by
    \begin{equation}
    \label{DUOT}
    \inf_{\xi\in \R^{\mathcal{X}}\!\oplus \R^{\mathcal{Y}}} \left\{ \F^*\!\left(-\xi \right) \: | \: A^*\xi \preceq c \right\}. \tag{\(\text{DUOT}_0\)}
    \end{equation}
and
\begin{equation}
\label{DUOT_epsilon}
\inf_{\xi \in \R^{\mathcal{X}} \oplus \R^{\mathcal{Y}}} \F^*(-\xi) + \sum_{(x,y) \in \mathcal{X} \times \mathcal{Y}} \varepsilon \exp\left( \frac{1}{\varepsilon} \inp{A^* \xi - c}{e_{x,y}} \right), \tag{\(\text{DUOT}_\varepsilon\)}
\end{equation}
where \( (e_{x,y})_{(x,y) \in \mathcal{X} \times \mathcal{Y}} \) denotes the canonical basis of \( \mathbb{R}^{\mathcal{X} \times \mathcal{Y}} \). We denote by \(\mathscr{K}_\varepsilon\) the functional of the dual problem, that is for $\xi = (\phi,\psi)$,
\[ \mathscr{K}_\varepsilon(\xi) = \F^*\!\left(-\xi\right) + \!\sum_{(x,y)\in\mathcal{X}\!\times\!\mathcal{Y}}\varepsilon e^{\frac{1}{\varepsilon}(\varphi_x +\psi_y-c(x,y))}.\]

\begin{remark}\label{legendre_transform_divergence}
    When \( \F(\cdot) = \mathrm{D}_\phi(\cdot \vert q) \), $\F^*$ is given by
    \[\F^*(\xi) = \inp{q}{\phi^*(\xi)} + I(\xi),\]
    where $I(\xi) = 0$ if $\xi_z \leq \phi'_\infty$ for every $z\in \mathcal X\sqcup \mathcal Y$ such that $q_z = 0$, and $I(\xi) = +\infty$ otherwise.
\end{remark}

We are now ready to state the basic results concerning existence, uniqueness and duality for these problems. 

\begin{proposition}
Let us assume \myref{H0} and \myref{Hdom}. Concerning the unbalanced OT problem, the following results hold:
\begin{enumerate}[(i)]
\item\label{strong_duality_unregularized} strong duality, i.e.
\[\inf \labelcref{UOT} + \inf \labelcref{DUOT} = 0;\]
\item\label{existence_primal_unregularized} existence for \labelcref{UOT} if $\F$ is coercive, and uniqueness up to $\ker A = \R (\one_{\R^{\mathcal X}},-\one_{\R^{\mathcal Y}})$ if $\F$ is strictly convex;
\item\label{existence_dual_unregularized} existence for \labelcref{DUOT}, and uniqueness if $\F^*$ is strictly convex.
\end{enumerate}
\end{proposition}

\begin{proof}

We begin by introducing the convex function \( G: \R^{\mathcal{X} \times \mathcal{Y}} \to \R \) defined by
\[
G(\gamma) = \inp{c}{\gamma} + \iota_{(\R_+)^{\mathcal{X} \times \mathcal{Y}}}(\gamma),
\]
where \( \iota_{(\R_+)^{\mathcal{X} \times \mathcal{Y}}} \) denotes the indicator function of the non-negative orthant in \( \R^{\mathcal{X} \times \mathcal{Y}} \). With this definition, the unbalanced optimal transport \labelcref{UOT} problem can be written as
\[
\inf_{\gamma \in \R^{\mathcal{X} \times \mathcal{Y}}} G(\gamma) + \F(A\gamma).
\]
By \myref{H0} and \myref{Hdom}, $\F$ is proper convex and lower semi-continuous, and there exists $(\mu,\nu) \in (\R_+^*)^{\mathcal X} \oplus (\R_+^*)^{\mathcal Y}$ such that $\F(\mu,\nu) < +\infty$. By definition of $A$, there exists $\gamma \in (\R_+^*)^{\mathcal X \times \mathcal Y}$ such that $A\gamma = (\mu,\nu)$. Since both $\F$ and $G$ are finite at $\gamma$ and $\F$ is continuous at $\gamma$, we may apply Fenchel–Rockafellar's Theorem \cite[Theorem~1.12]{brezisFunctionalAnalysisSobolev2011} to assert that strong duality holds and that there exists a solution to the dual problem:
\[
\inf_{\gamma \in \R^{\mathcal{X} \times \mathcal{Y}}} G(\gamma) + \F(A\gamma) = \max_{\xi \in \R^{\mathcal{X}} \oplus \R^{\mathcal{Y}}} -G^*(A^* \xi) - \F^*(-\xi),
\]
where \( \F^* \) and \( G^* \) are the Legendre–Fenchel transforms of \( \F \) and \( G \), respectively. Computing these transforms explicitly yields \labelcref{strong_duality_unregularized}.

Now, assume that \( \F \) is coercive. Notice that setting $\tilde A \gamma = (A\gamma, \inp{c}{\gamma})$ and $\tilde{\mathscr C}(p,t) = t + \F(p)$ for $(p,t) \in (\R^{\mathcal X} \oplus \R^{\mathcal Y}) \oplus \R$, \labelcref{UOT} can be equivalently written as
\[\inf_{\gamma \in (\R_+)^{\mathcal X \times \mathcal Y}} \tilde{\mathscr C}(\tilde A \gamma).\]
By \myref{H0}, it admits a competitor with finite cost. Let us notice that the cone $\Lambda = \tilde A \Bigl( (\R_+)^{\mathcal X \times \mathcal Y}\Bigr)$ is a closed convex cone, as a finitely generated convex cone (this can be shown by Caratheodory's theorem for cones). To get existence, we just have to show that $\tilde{\mathscr{C}}$ is coercive on $\Lambda$. Take a sequence $(A\gamma_n, \inp{c}{\gamma_n})_{n\in \N} \subseteq \Lambda$ which is unbounded. If $(A\gamma_n)$ is unbounded, $\F(A\gamma_n) \to +\infty$ by coercivity of $\F$, and since $\inp{c}{\gamma_n} \geq 0$ for every $n$, $\tilde{\mathscr C}(\gamma_n) \to +\infty$. If on the contrary $(A\gamma_n)$ is bounded, then $(\inp{c}{\gamma_n})$ is unbounded, and necessarily it tends to $+\infty$. Since $\F$ is convex proper and lower semi-continuous, it is bounded from below by a linear map, and since $(A\gamma_n)$ is bounded, then $(\F(\gamma_n))$ is lower bounded. This shows that $\tilde{\mathscr C}(\gamma_n) \to +\infty$ also in this case, hence $\tilde{\mathscr C}$ is coercive on the closed set $\Lambda$ thus the primal problem problem admits a minimizer. Existence for the dual problem has been justified above, and the uniqueness statements are straightforward consequences of strict convexity, thus \labelcref{existence_primal_unregularized} and \labelcref{existence_dual_unregularized} hold true.
\end{proof}

We shall need the definition of \emph{slack variable} associated with this problem later on.

\begin{definition}[Slack variable]
\label{I_0 and slack variable}
Assuming that \labelcref{DUOT} has a unique solution \(\bar \xi= (\bar\varphi,\bar\psi)\) then we denote by \(\kappa\) the slack variable, i.e. the gap between \(A^*\bar \xi\) and \(c\), given for every \((x,y) \in \mathcal{X}\!\times\!\mathcal{Y}\) by
\[\kappa_{x,y} \defeq c(x,y) - \bar\varphi_x - \bar\psi_y.\]
We denote by \(I_0\) the set of saturated constraint where $\kappa = 0$, namely
    \[I_0 \defeq \{ (x,y) \!\in\! \mathcal{X}\!\times\!\mathcal{Y} \:|\:  \bar\varphi_x + \bar\psi_y = c(x,y)\}.\]
\end{definition}

Let us now deal with existence and duality for the regularized problem.

\begin{proposition}
\label{prop: existence for regularized problems}
Under assumptions \myref{H0} and \myref{Hdom}, the following hold concerning the regularized unbalanced OT problem for every $\varepsilon >0$:
\begin{enumerate}[(i)]
\item\label{strong_duality_regularized} strong duality, i.e.
\[\inf \labelcref{UOT_epsilon} + \inf \labelcref{DUOT_epsilon} = 0;\]
\item\label{existence_primal_regularized} existence and uniqueness for \labelcref{UOT_epsilon};
\item\label{existence_dual_regularized} existence for \labelcref{DUOT_epsilon}, and uniqueness if $\F^*$ is strictly convex.
\end{enumerate}
\end{proposition}

\begin{proof}
Let \( \varepsilon > 0 \). Rather than using Fenchel-Rockafellar as in the unregularized case, we are going to show existence for the primal problem first then use subdifferential calculus to deduce strong duality and existence for the dual problem. Since \( \F \) is proper convex and lower semi-continuous, there exist \( a \in \R^{\mathcal{X}} \oplus \R^{\mathcal{Y}} \) and $b\in \R$ such that for any \( \xi \in \R^{\mathcal{X}} \oplus \R^{\mathcal{Y}} \), we have
\[
\F(\xi) \geq \inp{a}{\xi} + b.
\]
This implies that
\[
\mathscr{C}_\varepsilon(\gamma) = \inp{c}{\gamma} + \F(A\gamma) + \varepsilon \entropy(\gamma) \geq \inp{c + A^*a}{\gamma} + \varepsilon  \entropy(\gamma) + b.
\]
Using the superlinearity of the entropy functional \( \entropy \), we deduce that \( \mathscr{C}_\varepsilon \) is coercive. Since it is also lower semi-continuous, the \labelcref{UOT_epsilon} admits at least a solution. The strict convexity of \( \entropy \) ensures that \( \mathscr{C}_\varepsilon \) admits a unique solution, so that \labelcref{existence_primal_regularized} holds. Now, let \( \gamma_\varepsilon \) be the unique minimizer of \labelcref{UOT_epsilon}. Then, by first-order optimality conditions,
\[
0 \in \partial \mathscr{C}_\varepsilon(\gamma_\varepsilon).
\]
Define the functional \( G_\varepsilon : \R^{\mathcal{X} \times \mathcal{Y}} \to \R \) by
\[
G_\varepsilon(\gamma) \defeq \inp{c}{\gamma} + \varepsilon \entropy(\gamma)\quad\text{so that}\quad \mathscr{C}_\varepsilon = G_\varepsilon + \F.
\]
Then the optimality condition becomes
\[
0 \in A^* \, \partial \F(A\gamma_\varepsilon) + \partial G_\varepsilon(\gamma_\varepsilon),
\]
where we used assumption \myref{Hdom} (see \cite[Theorem 23.8]{rockafellar1970convex}) to guarantee the existence of a point $\gamma$ such that $G_\varepsilon$ is continuous at $\gamma$ and $A\gamma$ lies in the interior of $\domain \F$. Therefore, there exists \( u \in \partial \F(A\gamma_\varepsilon) \) and \( v \in \partial G_\varepsilon(\gamma_\varepsilon) \) such that
\[
0 = A^* u + v.
\]
By the Fenchel–Young equality, we have
\[
\F(A\gamma_\varepsilon) + \F^*(u) = \inp{u}{A\gamma_\varepsilon},
\quad \text{and} \quad
G_\varepsilon(\gamma_\varepsilon) + G_\varepsilon^*(v) = \inp{\gamma_\varepsilon}{v} = \inp{\gamma_\varepsilon}{\varepsilon -A^*u} = -\inp{u}{A\gamma_\varepsilon}.
\]
Summing the two equalities yields:
\[
\F(A\gamma_\varepsilon) + G_\varepsilon(\gamma_\varepsilon) + \F^*(u) + G_\varepsilon^*(-A^*u) = 0,
\]
from which we get 
\[
\F(A\gamma_\varepsilon) + G_\varepsilon(\gamma_\varepsilon) = -\F^*(u) - G_\varepsilon^*(-A^*u).
\]
This shows at the same time that $u$ is a solution to the dual problem and strong duality holds. Since the Legendre transforms $\F^*, G^*_\varepsilon$ can be computed explicitly we have \labelcref{strong_duality_regularized}. Obviously, uniqueness holds if $\F^*$ is strictly convex, and \labelcref{existence_dual_regularized} holds.
\end{proof}

\begin{remark}
Uniqueness of the solution for \labelcref{DUOT_epsilon} holds modulo \(\ker A^*\), even in the absence of strict convexity of 
\(\F^*\).
\end{remark}

\begin{remark}
\label{q>0}
In the case where \( \F(\cdot) = \mathrm{D}_\phi(\cdot \vert q) \) and $q \succ 0$, we have already seen that \myref{Hdom} holds if and only if $\phi(\alpha) < +\infty$ for some $\alpha > 0$, in which case the previous proposition holds, and we have existence for \labelcref{DUOT_epsilon}. Let us show that existence for \labelcref{DUOT_epsilon} may fail without the assumption \(q\succ 0\). Consider a function $\phi$ such that $\phi'_\infty = +\infty$ and assume for example that \(q_{x_0}=\mu_{x_0} =0\) for some \(x_0 \in \mathcal{X}\). By contradiction assume that there a solution \(\xi(\varepsilon)\) of \labelcref{DUOT_epsilon}. We define a sequence \((\xi^n(\varepsilon))_{n\in\mathbb{N}}\) such that for any \(n\in\mathbb{N}\), 
\[\xi_{z}^n(\varepsilon)=
\begin{dcases*}
  -n & if  \(z=x_0\) \\
  \xi_z(\varepsilon) & otherwise.
\end{dcases*}
\]
Since $\phi'_\infty = +\infty$, by \Cref{legendre_transform_divergence} we have
\[\F^*(-\xi^n) = \inp{q}{\phi^*(-\xi^n(\varepsilon))} = \F^*(-\xi)\]
and
\begin{align*}
     \mathscr{K}_\varepsilon(\xi^n(\varepsilon)) &= \F^*(-\xi^n(\varepsilon)) + \sum_{(x,y)\in\mathcal{X}\!\times\!\mathcal{Y}} \varepsilon e^{\frac{1}{\varepsilon}\inp{A^*\xi^n(\varepsilon) -c} = {e_{x,y}}} \\
     &= \mathscr{K}_\varepsilon(\xi(\varepsilon)) + \varepsilon\sum_{y\in\mathcal{Y}} e^{\frac{1}{\varepsilon}(\psi_y(\varepsilon) - c(x_0 , y))- \frac{n}{\varepsilon}} - e^{\frac{1}{\varepsilon}(\varphi_{x_0}(\varepsilon) + \psi_y(\varepsilon) - c(x_0 , y))} \\
     &\underset{n \to +\infty}{\longrightarrow}\mathscr{K}_\varepsilon(\xi(\varepsilon)) - \varepsilon\sum_{y\in\mathcal{Y}}e^{\frac{1}{\varepsilon}(\varphi_{x_0}(\varepsilon) + \psi_y(\varepsilon) - c(x_0 , y))} < \mathscr{K}_\varepsilon(\xi(\varepsilon)),
\end{align*}
which is a contradiction.
\end{remark}

\begin{proposition}
\label{prop:primal_dual}
    Let $\varepsilon > 0$. Under \myref{H0} and \myref{Hdom}, the optimal regularized transport coupling for \labelcref{UOT_epsilon} denoted \(\gamma(\varepsilon)\) and any solution of \labelcref{DUOT_epsilon} denoted \(\xi_\eps\) are related through the following formula
    \[ \gamma(\varepsilon) = \exp(\tfrac{1}{\varepsilon}(A^*\xi_\varepsilon-c)).\]
\end{proposition}

\begin{proof}
We observe that by strong duality, as stated in \Cref{prop: existence for regularized problems},
\[\F(\gamma(\varepsilon)) + \inp{c}{\gamma(\varepsilon)} + \varepsilon \entropy(\gamma) +\F^*(-\xi_\varepsilon) - \varepsilon \sum_{(x,y)\in\mathcal X\times \mathcal Y} \exp\left(\frac 1\varepsilon \inp{A^*\xi_\varepsilon-c}{e_{x,y}}\right) = 0.
\]
Denoting $(\gamma_{1,\varepsilon},\gamma_{2,\varepsilon}) = A \gamma(\varepsilon)$ and $(\varphi_\varepsilon,\psi_\varepsilon) = \xi_\varepsilon$, by Fenchel-Young inequality $\F(\gamma(\varepsilon)) + \F^*(-\xi_\varepsilon) \geq \inp{\gamma(\varepsilon)}{\xi_\varepsilon} = \inp{\gamma_{1,\varepsilon}}{\varphi_\varepsilon}+\inp{\gamma_{2,\varepsilon}}{\psi_\varepsilon}$ thus
\[\inp{c}{\gamma(\varepsilon)} + \varepsilon\entropy(\gamma(\varepsilon))\leq \inp{\gamma_{1,\varepsilon}}{\varphi_\varepsilon}+\inp{\gamma_{2,\varepsilon}}{\psi_\varepsilon} + \varepsilon \sum_{(x,y)\in\mathcal X\times \mathcal Y} \exp\left(\frac 1\varepsilon \inp{A^*\xi-c}{e_{x,y}}\right).\]
The left-hand side is the primal functional of \labelcref{OT_epsilon} with fixed marginals $(\gamma_{1,\varepsilon},\gamma_{2,\varepsilon})$ evaluated at $\gamma(\varepsilon)$, and the right-hand side is its dual functional evaluated at $\xi_\varepsilon$. Since the former is always greater than the latter, we have equality and $\gamma(\epsilon)$ and $\xi_\varepsilon$ are respectively optimal for the primal and dual regularized OT problems with these fixed marginals. It classically follows that 
\[
\gamma(\varepsilon) = \exp\big( \tfrac{1}{\varepsilon}(A^*\xi_\varepsilon - c) \big).
\]
\end{proof}

\section{Convergence and regularity of the trajectories}\label{sec:convergence}

This section is devoted to the study of the trajectory in \(\varepsilon\) of the solutions for the primal and dual problems, \labelcref{UOT_epsilon} and \labelcref{DUOT_epsilon} respectively. Before making more assumptions on the function \(\F\), let us recall some definitions for real valued functions defined on a normed vector space \(E\).

\begin{definition}
\label{def: superlinear} A function \(f\!:E \to \R \cup \{+\infty\} \) is \emph{super-linear at infinity} if 
    \[\lim\limits_{\| x\|\to +\infty} \frac{f(x)}{\| x\|} =+\infty.\]
\end{definition}

\begin{definition}
    \label{weakly concave}
    A proper convex and lower semi-continuous function \(f\!:E\to \R\cup \{+\infty\}\) is \emph{strongly convex} at \(x_0\in \domain f\), if there exists \(r>0\), \(x_0^*\in \partial f(x_0)\) and \(\lambda(x_0,r) > 0\) such that
    \[\forall x \in B(x_0,r) \cap \domain f, \quad f(x)\geq f(x_0) + \inp{x_0^*}{x-x_0} + \frac{\lambda(x_0,r)}{2}\lVert x-x_0\rVert^2.\]
\end{definition}

Let us introduce a further series of hypotheses that we shall use several times throughout the paper.

\begin{enumerate}[(H$_1$),start=1]
    \item \label{H1}\(W \cap (\R_+)^{\mathcal{X\!\times\mathcal{Y}}} \subseteq \domain \F\), for some neighborhood \(W\) of the origin;
    \item \label{H2}\(\F\) is super-linear at infinity;
    \item \label{H3} \(\F^*\) is strongly convex at every point of its domain;
    \item \label{H4} \(\F^*\) is \(\mathcal{C}^2\) on \(\interior (\domain \F^*)\).
\end{enumerate}

Notice that \myref{H1} implies \myref{Hdom} and \myref{H3} implies strict convexity of $\F^*$.

\begin{remark}
In the case where \( \F(\cdot) = \mathrm{D}_\phi(\cdot \vert q) \), with \( q \succ 0\) fixed, all these assumptions on $\F$ are equivalent to the following assumptions on $\phi$:
\begin{enumerate}[(H$'_1$),start=1]
\item $\domain \phi$ contains some neighborhood of $0$ in $\R_+$;
\item $\phi'_\infty = +\infty$;
\item $\phi^*$ is strongly convex at every point of its domain;
\item $\phi^*$ is $\mathcal{C}^2$ on  \(\interior (\domain \phi^*)\).
\end{enumerate}
This can be proven straighforwardly using the expression of $\F^*$ given in \Cref{legendre_transform_divergence}.
\end{remark}

The following proposition shows in particular that under \myref{H2}, \(\F^*\) has full domain. Its proof can be found for example in \cite{santambrogio2023course}, but we provide it for the sake of completeness.
\begin{proposition}
\label{domain F^*}
Under assumptions \myref{H0} and \myref{H2}, the Legendre transform \(\F^*\) is bounded on every ball.
\end{proposition}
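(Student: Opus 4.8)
The plan is to estimate $\F^*$ from above and from below on a ball $B(0,R)\subset\R^{\mathcal X}\oplus\R^{\mathcal Y}$; the case of an arbitrary ball $B(\xi_0,R)$ then follows from the inclusion $B(\xi_0,R)\subset B(0,\|\xi_0\|+R)$.

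For the lower bound I would simply use that $\F$ is proper by \cref{H1}: fixing any $x_0\in\domain\F$, the very definition of the Legendre transform gives, for every $\xi\in B(0,R)$,
\[
\F^*(\xi)\;\ge\;\langle\xi,x_0\rangle-\F(x_0)\;\ge\;-R\,\|x_0\|-\F(x_0),
\]
so $\F^*$ is bounded below on $B(0,R)$ (and, in particular, never equal to $-\infty$).

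For the upper bound, which is the only place where super-linearity is used, I would first recall that a proper convex lower semi-continuous function admits an affine minorant, i.e. there exist $a\in\R^{\mathcal X}\oplus\R^{\mathcal Y}$ and $b\in\R$ with $\F(x)\ge\langle a,x\rangle+b$ for all $x$. Next, applying \cref{H3} with the value $M\defeq R+1$, there is $\rho>0$ such that $\F(x)\ge M\|x\|$ whenever $\|x\|\ge\rho$. Then, for any $\xi\in B(0,R)$ and any $x$, I split according to the size of $x$: if $\|x\|\le\rho$ then $\langle\xi,x\rangle-\F(x)\le R\rho+\|a\|\rho-b$, while if $\|x\|>\rho$ then $\langle\xi,x\rangle-\F(x)\le(\|\xi\|-M)\|x\|\le-\|x\|<0$. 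Taking the supremum over $x\in\R^{\mathcal X}\oplus\R^{\mathcal Y}$ yields
\[
\F^*(\xi)\;\le\;\max\bigl(R\rho+\|a\|\rho-b,\;0\bigr)
\]
uniformly in $\xi\in B(0,R)$. Combined with the lower bound, this shows that $\F^*$ is real-valued and bounded on $B(0,R)$, hence on every ball.

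I do not anticipate a genuine obstacle here; the only points requiring a little care are the existence of the affine minorant (a standard fact for proper convex lsc functions, see e.g. \cite{rockafellar1970convex}) and the observation that the inequality $\langle\xi,x\rangle-\F(x)\le\cdots$ stays trivially valid when $\F(x)=+\infty$. As a byproduct one obtains $\domain\F^*=\R^{\mathcal X}\oplus\R^{\mathcal Y}$, as announced just before the statement.
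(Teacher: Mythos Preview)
Your proof is correct and follows essentially the same strategy as the paper's: split the supremum defining $\F^*(\xi)$ according to whether $\|x\|$ is large or small, use super-linearity for large $\|x\|$ and an affine minorant of $\F$ for small $\|x\|$. The only cosmetic difference is that for the lower bound you plug in a single point $x_0\in\domain\F$, while the paper invokes the existence of an affine minorant of $\F^*$; both are equivalent here.
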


\begin{proof}
Let \(R>0\). Recall that for every \(\xi\in B(0,R)\) we have
\[\F^*(\xi) \defeq \sup_{v \in \R^\mathcal{X}\!\oplus\R^\mathcal{Y}} \inp{v}{\xi} - \F(v).\]
By super-linearity at infinity of \(\F\), there exists \(M>0\) such that \(\F(v) \leq R\|v\|\) whenever \(\|v\| \geq M \), in which case \(\|v\| \| \xi \| - \F(v) \leq 0\), while for \(v \leq M\) we have
\[
\inp{v}{\xi} -\F(v) \leq M \|\xi\| - \inf_{\overline{B}(0, M)} \F,
\]
the infimum being a finite quantity since a proper lower semi-continuous convex function always has an affine minorant. We have shown that \(\F^*\) is bounded from above on every ball, and since it is also proper convex and lower semi-continuous by \myref{H0}, it is also bounded from below on every ball.
\end{proof}



\begin{lemma}[Uniform coercivity of \(\mathscr K_\varepsilon\)]\label{uniform_coercivity_kantorovich_functional}
    Under assumptions \myref{H0} and \myref{H1}, the functional \(\inf\limits_{\varepsilon>0} \mathscr{K}_\varepsilon\) is coercive.
\end{lemma}

\begin{proof}
For any \(\xi=(\varphi,\psi)\in \R^\mathcal{X}\!\oplus\R^\mathcal{Y}\), one has
\begin{align*}
\inf_{\varepsilon>0} \mathscr{K}_\varepsilon(\xi) &= \F^*\left(-\xi\right) + \inf_{\varepsilon>0}\!\sum_{(x,y)\in\mathcal{X}\!\times\!\mathcal{Y}}\!\varepsilon e^{\frac{1}{\varepsilon}(\varphi_x +\psi_y-c(x,y))} \\
&\geq \F^*\left(-\xi\right) + \inf_{\varepsilon>0}\!\sum_{(x,y)\in \mathcal{X}\!\times\!\mathcal{Y}}\varepsilon e^{\frac{1}{\varepsilon}\left(\varphi_x +\psi_y-c(x,y)\right)_+} \\
&\geq \F^*\left(-\xi\right) + \sum_{(x,y)\in \mathcal{X}\!\times\!\mathcal{Y}}\left( \varphi_x +\psi_y-c(x,y)\right)_+\defeq V(\xi),
\end{align*}
where $u_+ = \max\{0,u\}$ denotes the positive part of $u$. Let us show that \(V\) is coercive. Let \((\xi^n)_{n \in \mathbb{N}}=((\varphi^n , \psi^n))_{n \in \mathbb{N}}\) be a sequence in \(\R^{\mathcal{X}} \!\oplus \R^{\mathcal{Y}}\) such that \(\| \xi^n \| \to +\infty\) as \(n \to +\infty\). On one hand, if there exists \( z_0 \in \mathcal{X} \sqcup \mathcal{Y} \) such that, after taking a subsequence, \(\xi_{z_0}^n \to -\infty\) then we have, using \myref{H1}, that there exists \(r \in\R^*_+ \) such that \(r e_{z_0}\in\domain \F \). It leads to 
\[V(\xi^n) \geq \F^*(-\xi^n) \geq -r\xi_{z_0}^n - \F(r e_{z_0}) \xrightarrow{n\to+\infty} +\infty.\]
On the other hand, if that is not the case, then for any \(z \in \mathcal{X}\sqcup \mathcal{Y}\),  \(\xi_{z}^n\) is bounded from below by some constant $M <0$. Thus there exists \(z_0 \in \mathcal{X}\sqcup \mathcal{Y} \) such that \(\xi_{z_0}^n \to +\infty\). Without loss of generality we assume that \(z_0 \in \mathcal{X}\). Then we have, using \myref{H1} again, that for any \(y\in \mathcal{Y}\), there exists \(r \in\R^*_+ \) such that \(r e_y\in\domain \F \). For $n$ large enough, it leads to 
\begin{align*}
V(\xi^n) &\geq  \F^*(-\xi^n) + (\varphi_{z_0}^n + \psi_y^n - c(z_0,y))_+  \\
&\geq  -r\psi_y^n - \F(r e_y) + \varphi_{z_0}^n + \psi_y^n - c(z_0,y) \\
&\geq \abs{1-r}M - F(re_y) + \varphi_{z_0}^n - c(z_0,y)\underset{n \to +\infty}{\longrightarrow}  +\infty.
\end{align*}
We have shown that \(V(\xi^n) \to +\infty\) along a subsequence, but since the same reasoning applies to any arbitrary subsequence taken in the beginning, we get the result for the whole sequence, which establishes the coercivity of \(V\).
\end{proof}

\subsection{Dual trajectory}

In this section we show that the primal trajectory is regular, that it sastisfies an ODE (that will be used in \Cref{sec:asymptotics}), and that it converges as $\varepsilon \to 0$.

\begin{proposition}
    \label{regularity of dual trajectory}
     Assume that assumptions \myref{H0}-\myref{H4} hold. Then, there exists a unique solution $\xi(\varepsilon)$ to \labelcref{DUOT_epsilon} for every \(\varepsilon > 0\) and the trajectory \(\varepsilon \mapsto \xi(\varepsilon)\) is a \(\mathcal{C}^1\) function on \(\R_+^*\).
\end{proposition}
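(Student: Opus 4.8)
The plan is to prove existence and uniqueness first, then upgrade to $\mathcal C^1$ regularity via the implicit function theorem. For existence, I would invoke Lemma~\ref{uniform_coercivity_kantorovich_functional}: under \cref{H1} and \cref{H4} the functional $\inf_{t>0}\mathscr K_t$ is coercive, hence each $\mathscr K_t$ is coercive; since $\mathscr K_t$ is also proper (using \cref{H4} $\Rightarrow$ \cref{H2}, so $\F^*$ is finite somewhere — indeed by \Cref{domain F^*} under \cref{H3} it has full domain) and lower semi-continuous and convex, the direct method yields a minimizer. For uniqueness, I would use that under \cref{H5} the term $\xi\mapsto\F^*(-\xi)$ is strongly (hence strictly) convex on its domain, and the exponential sum is convex, so $\mathscr K_t$ is strictly convex and the minimizer $\xi(t)$ is unique.

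For the regularity statement, the key is that $\mathscr K_t$ is $\mathcal C^2$: the exponential sum is smooth in $\xi$, and by \cref{H6} $\F^*$ is $\mathcal C^2$ on $\interior(\domain\F^*)$, which by \Cref{domain F^*} (valid under \cref{H1}, \cref{H3}) is all of $\R^{\mathcal X}\oplus\R^{\mathcal Y}$. The optimality condition reads
\[
G(\xi,t)\defeq -\nabla\F^*(-\xi) + \sum_{(x,y)\in\mathcal X\times\mathcal Y} e^{t(\varphi_x+\psi_y-c(x,y))}\, A e_{x,y}^{\,\vee} = 0,
\]
i.e. $\nabla_\xi\mathscr K_t(\xi(t))=0$, where $G\colon (\R^{\mathcal X}\oplus\R^{\mathcal Y})\times\R_+^*\to\R^{\mathcal X}\oplus\R^{\mathcal Y}$ is $\mathcal C^1$ jointly in $(\xi,t)$. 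To apply the implicit function theorem at a point $(\xi(t_0),t_0)$ I need $\nabla_\xi G(\xi(t_0),t_0)=\nabla^2_\xi\mathscr K_{t_0}(\xi(t_0))$ to be invertible. This Hessian is $\nabla^2\F^*(-\xi) + t_0\, A^*\,\diagonal\!\big(e^{t_0(A^*\xi-c)}\big)\,A$; the second summand is positive semi-definite, and the first is positive definite on the relevant subspace by the strong convexity of $\F^*$ from \cref{H5}.

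The main obstacle is the kernel of $A^*$: as noted in \Cref{link EOT dual} and the remarks, $\ker A^*$ is one-dimensional (spanned by $(\mathbf 1_{\mathcal X},-\mathbf 1_{\mathcal Y})$), and $\mathscr K_t$ is \emph{not} strictly convex in that direction for the exponential part — so one must be careful about whether the full Hessian is genuinely invertible. The resolution is that $\F^*$ supplies strict (indeed strong) convexity in \emph{all} directions by \cref{H5} (strong convexity at every point of its full domain), so $\nabla^2\F^*(-\xi)\succ 0$ and hence the full Hessian $\nabla^2_\xi\mathscr K_{t_0}(\xi(t_0))\succ 0$ is invertible; this is precisely why uniqueness of $\xi(t)$ (not just modulo $\ker A^*$) holds under \cref{H5} and why the naive obstruction disappears. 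I would then conclude: the implicit function theorem gives a $\mathcal C^1$ local solution $t\mapsto\tilde\xi(t)$ near each $t_0>0$ with $\nabla_\xi\mathscr K_t(\tilde\xi(t))=0$; by uniqueness of the minimizer this coincides with $\xi(t)$, so $t\mapsto\xi(t)$ is $\mathcal C^1$ on all of $\R_+^*$.
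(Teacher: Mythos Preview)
Your proposal is correct and follows essentially the same approach as the paper: existence/uniqueness from strict convexity of $\mathscr K_t$ (via \cref{H5}), then $\mathcal C^1$ regularity via the implicit function theorem, using that the Hessian $D^2\mathscr K_{t_0}(\xi(t_0))$ is positive definite because $D^2\F^*(-\xi(t_0))\succ 0$ under \cref{H5} and \cref{H6}. Your treatment is slightly more explicit (you write out the Hessian and address the $\ker A^*$ issue directly), and for existence you invoke the uniform coercivity lemma plus the direct method, whereas the paper simply cites \Cref{prop: existence for regularized problems}; but the substance is the same. One cosmetic slip: the Hessian of the exponential term is $t_0\,A\,\diagonal(\gamma)\,A^*$ (mapping $\R^{\mathcal X}\!\oplus\!\R^{\mathcal Y}$ to itself), not $A^*\diagonal(\gamma)A$.
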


\begin{proof}
The existence and uniqueness of the solution to \labelcref{DUOT_epsilon} comes from \Cref{prop: existence for regularized problems} and the strict convexity of the functional \(\mathscr{K}_\varepsilon\), resulting from \myref{H3}.

Let us first consider any \(\varepsilon_0\in \R_+^*\) and denote by \(f\!: \R_+^*\times (\R^\mathcal{X}\!\oplus\R^\mathcal{Y})\to\R\) the function defined by \(f(\varepsilon,\xi)=D\!\mathscr{K}_\varepsilon(\xi)\). It is well-defined and of class $\mathcal{C}^2$ by \Cref{domain F^*} and \myref{H4}. By optimality of $\xi(\varepsilon_0)$, \(f(\varepsilon_0,\xi(\varepsilon_0))=0\) and by \myref{H3}, we get \(\det D_\xi f(\varepsilon_0,\xi(\varepsilon_0)) = \det D^2\! \mathscr{K}_{\varepsilon_0} (\xi(\varepsilon_0)) > 0\). Thus the implicit function theorem allows to deduce that the trajectory is of class \(\mathcal{C}^1\) on \(\R_+^*\).
\end{proof}

In the following $\xi(\varepsilon)$ will always denote the unique solution of \labelcref{DUOT_epsilon}.

\begin{proposition}\label{proposition_ODE}
Assume that assumptions \myref{H0}-\myref{H4} hold. Then the trajectory \(\varepsilon\mapsto \xi(\varepsilon)\) satisfies the following ordinary differential equation (ODE) on \(\R_+^*\),
\begin{equation*}
    D^2\!\mathscr{K}_\varepsilon(\xi(\varepsilon)) \dot{\xi}(\varepsilon) + \left(\frac{\partial}{\partial \varepsilon} D\!\mathscr{K}_\varepsilon\right)(\xi(\varepsilon)) = 0,
\end{equation*}
which can be explicitly written, denoting $\gamma(\varepsilon)$ the unique solution to \labelcref{UOT_epsilon}, as
\begin{equation}
    \label{ODE}
    A \diagonal \gamma(\varepsilon) A^*\dot{\xi}(\varepsilon) + \varepsilon D^2\!\F^*\!(-\xi(\varepsilon))\dot{\xi}(\varepsilon) -  A \diagonal \gamma(\varepsilon)\frac{A^*\xi(\varepsilon)-c}{\varepsilon} = 0.
    \tag{\(\text{ODE}\)}
\end{equation}
Here $\diagonal \gamma$ denotes the linear operator on $\R^{\mathcal X\times \mathcal Y}$ given by $(\diagonal \gamma) \rho = (\gamma_{x,y} \rho_{x,y})_{x,y}$. 
\end{proposition}

\begin{proof}
For every \(\varepsilon > 0\), by using the optimality of \(\xi(\varepsilon)\) for the functional \(\mathscr{K}_\varepsilon\), one can obtain
\[D\!\mathscr{K}_\varepsilon(\xi(\varepsilon)) = 0.\]
The function \(\varepsilon \mapsto D\!\mathscr{K}_\varepsilon(\xi(\varepsilon))\) is \(\mathcal{C}^1\) on \(\R_+^*\). Differentiating the previous equality with respect to $t$ and using the chain rule leads to
\[D^2\!\mathscr{K}_\varepsilon(\xi(\varepsilon)) \dot{\xi}(\varepsilon) + \left(\frac{\partial}{\partial \varepsilon} D\!\mathscr{K}_\varepsilon\right)(\xi(\varepsilon)) = 0.\]
Recalling the expression of $\mathscr{K}_\varepsilon$, i.e. $\mathscr{K}_\varepsilon(\gamma) = \F^*(-\xi) + \sum_{(x,y) \in \mathcal{X} \times \mathcal{Y}} \varepsilon \exp\left( \frac{1}{\varepsilon} \inp{A^* \xi - c}{e_{x,y}} \right)$ and the relation between $\gamma(\varepsilon)$ and $\xi(\varepsilon)$ given in \Cref{prop:primal_dual}, we may compute \(D\!\mathscr{K}_\varepsilon\), \(D^2\!\mathscr{K}_\varepsilon\), \(D\!\mathscr{K}_\varepsilon\) and $(\partial/\partial\varepsilon)(D\mathscr{K}_\varepsilon)$ explicitly, so as to obtain \cref{ODE}.
\end{proof}

\begin{proposition}\label{convergence_dual_optimizer}
    Under assumptions \myref{H0}-\myref{H3}, the trajectory \(\varepsilon\mapsto \xi(\varepsilon)\) converges as \(\varepsilon\rightarrow0\) towards the unique solution \(\bar\xi\) to \cref{DUOT}.
\end{proposition}

\begin{proof} Recall that $\bar\xi$ is unique by strict convexity of $\F^*$. Using the optimality of \(\xi(\varepsilon)\) for \labelcref{DUOT_epsilon}, we leverage the following inequality :
\[
     \mathscr K_\varepsilon(\xi(\varepsilon)) = \F^*\!\left(-\xi(\varepsilon)\right) + \!\sum_{(x,y)\in \mathcal{X}\!\times\!\mathcal{Y}}\!\varepsilon e^{\frac{1}{\varepsilon}\inp{A^*\xi(\varepsilon)-c}{e_{x,y}}} \leq  \F^*\!\left(-\bar\xi\right) + \!\sum_{(x,y)\in \mathcal{X}\!\times\!\mathcal{Y}}\!\varepsilon e^{\frac{1}{\varepsilon} \inp{A^*\bar\xi-c}{e_{x,y}}}.\]
The constraint \(A^*\bar\xi \preceq c\) allows to deduce that
\begin{equation}
    \label{ineq_proof}
 \mathscr K_\varepsilon(\xi(\varepsilon)) =\F^*\!\left(-\xi(\varepsilon)\right) + \!\sum_{(x,y)\in \mathcal{X}\!\times\!\mathcal{Y}}\! \varepsilon e^{\frac{1}{\varepsilon}\inp{A^*\xi(\varepsilon)-c}{e_{x,y}}} \leq \F^*\!\left(-\bar\xi\right) + \varepsilon \abs{\mathcal X}\abs{\mathcal Y}.
\end{equation}
From this, and the uniform coercivity of $\mathscr K_\varepsilon$ as stated in \Cref{uniform_coercivity_kantorovich_functional} (which applies thanks to \myref{H0} and \myref{H2}), we deduce that the trajectory $\varepsilon \mapsto \xi(\varepsilon)$ is bounded on \((0,1)\). 
By Bolzano-Weierstrass Theorem, we may consider a sequence \( (\varepsilon_n)_{n \in \mathbb{N}} \) such that \( \xi(\varepsilon_n) \to \tilde{\xi} \) as \( n \to +\infty \), where \( \tilde{\xi} \) is an accumulation point of the trajectory (at \(0\)). Since \((\xi(\varepsilon_n))_{n\in \N}\) is bounded, by \Cref{domain F^*} we know that \(\abs{\F^*(-\xi(\varepsilon_n))}\) is bounded by some constant \(C >0\) for any \(n\in\N\). Thus by \labelcref{ineq_proof}, for every \( n \) and every \( (x, y) \in \mathcal{X} \times \mathcal{Y} \),
\[
 \varepsilon_n e^{\frac{1}{\varepsilon_n}\inp{A^*\xi(\varepsilon_n)-c}{e_{x,y}}} \leq C+\F^*\!\left(-\bar\xi\right)+ \varepsilon_n \abs{\mathcal X}\abs{\mathcal Y}.
\]
If \(\alpha \defeq  \inp{A^* \tilde\xi-c}{e_{x,y}}  > 0\) then for \(n\) large enough we would have \(\varepsilon_n e^{\frac{1}{\varepsilon_n}\inp{A^*\xi(\varepsilon_n)-c}{e_{x,y}}} \geq \varepsilon_n e^{\frac{\alpha}{2\varepsilon_n}} \to +\infty\): a contradiction with the boundedness of the right-hand side of the previous equation. We conclude that \( A^*\tilde{\xi} \preceq c \), meaning that any accumulation point is feasible for the non-regularized dual problem. Moreover by \labelcref{ineq_proof} we also have
\[
\F^*\!\left(-\xi(\varepsilon_n)\right) \leq \F^*\!\left(-\bar\xi\right) + \varepsilon_n \abs{\mathcal X}\abs{\mathcal Y},
\]
and taking the limit as \( n \to +\infty \), we obtain by lower semi-continuity of $\F^*$:
\[
\F^*\!(-\tilde{\xi}) \leq \F^*\!(-\bar\xi).
\]
This shows that any accumulation point of the trajectory is optimal for \labelcref{DUOT} thus equal to \(\bar\xi\), hence \(\xi(\varepsilon)\) converges to \( \bar\xi \).
\end{proof}

\subsection{Primal trajectory}

The purpose of the following proposition is to show that the curve \(\varepsilon \mapsto \gamma(\varepsilon )\) converges to \(\bar\gamma\), the solution of \cref{UOT} of minimal entropy (which is unique by strict convexity of \(\entropy\)), under suitable assumptions.

\begin{proposition}\label{convergence_primal_optimizer}
    Under assumptions  \myref{H0}-\myref{H4}, the trajectory \(\varepsilon\mapsto \gamma(\varepsilon)\) converges towards \(\bar\gamma\), the solution of \cref{UOT} with minimal entropy \(\entropy\), as \(\varepsilon\to 0\). Moreover, \(\varepsilon\mapsto \gamma(\varepsilon)\) is a \(\mathcal{C}^1\) function on \(\mathbb{R}_+^*\).
\end{proposition}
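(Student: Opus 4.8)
The plan is to prove the two assertions separately. The $\mathcal{C}^1$ regularity is essentially a corollary of the results already established, and for the convergence I would run the standard ``selection of the minimal competitor'' argument for entropic regularization. For the regularity: by \Cref{prop:primal_dual} one has $\gamma(t) = e^{t(A^*\xi(t)-c)}$ for every $t>0$, and by \Cref{regularity of dual trajectory} the dual trajectory $t\mapsto\xi(t)$ is $\mathcal{C}^1$ on $\R_+^*$ under \cref{H1}--\cref{H6}; since $(t,\xi)\mapsto e^{t(A^*\xi-c)}$ is smooth, the composition $t\mapsto\gamma(t)$ is $\mathcal{C}^1$ on $\R_+^*$.

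For the convergence, the first step is to bound the trajectory. Note first that $\gamma(t)$ is well defined and unique by \Cref{prop: existence for regularized problems}, and that \labelcref{UOT} admits a solution: under \cref{H3}, $\F$ is super-linear hence coercive, and since $A$ is injective the map $\gamma\mapsto\langle c,\gamma\rangle+\F(A\gamma)$ is coercive and lower semicontinuous on $(\R_+)^{\mathcal{X}\times\mathcal{Y}}$, so its solution set $S$ is compact and convex and $\entropy$ attains on it a unique minimum, namely $\gamma^*$. Fixing any $\gamma^*\in S$, optimality of $\gamma(t)$ for \labelcref{UOT_t rescaled} gives
\[
\langle c,\gamma(t)\rangle+\F(A\gamma(t))+\tfrac1t\entropy(\gamma(t))\;\le\;\langle c,\gamma^*\rangle+\F(A\gamma^*)+\tfrac1t\entropy(\gamma^*).
\]
Since $s\mapsto s(\log s-1)\ge-1$, we have $\entropy(\gamma(t))\ge-\abs{\mathcal{X}}\abs{\mathcal{Y}}$, so for $t\ge1$ the quantity $\langle c,\gamma(t)\rangle+\F(A\gamma(t))$ is bounded above independently of $t$; combined with $\langle c,\gamma(t)\rangle\ge0$ (as $c\ge0$, $\gamma(t)\ge0$) and the coercivity noted above, this shows $\{\gamma(t):t\ge1\}$ is bounded.

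Next I would take any accumulation point $\tilde\gamma$ of $\gamma(t)$ as $t\to+\infty$, say along $t_n\to+\infty$. On the bounded set $\{\gamma(t):t\ge1\}$ the entropy is bounded, hence $\tfrac1{t_n}\entropy(\gamma(t_n))\to0$; passing to the limit in the displayed inequality, using lower semicontinuity of $\gamma\mapsto\langle c,\gamma\rangle+\F(A\gamma)$ and closedness of $(\R_+)^{\mathcal{X}\times\mathcal{Y}}$, yields $\tilde\gamma\in S$. Moreover, subtracting from the inequality the relation $\langle c,\gamma^*\rangle+\F(A\gamma^*)=\inf\labelcref{UOT}\le\langle c,\gamma(t_n)\rangle+\F(A\gamma(t_n))$ (valid since $\gamma(t_n)$ is feasible for \labelcref{UOT}) gives $\entropy(\gamma(t_n))\le\entropy(\gamma^*)$ for all $n$, whence $\entropy(\tilde\gamma)\le\liminf_n\entropy(\gamma(t_n))\le\entropy(\gamma^*)$ by lower semicontinuity. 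Since $\tilde\gamma\in S$ and $\gamma^*$ is the (unique) minimizer of $\entropy$ over $S$, this forces $\tilde\gamma=\gamma^*$. As every accumulation point of the bounded trajectory equals $\gamma^*$, we conclude $\gamma(t)\to\gamma^*$.

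The main obstacle I anticipate is the boundedness step: one must check that the $\tfrac1t\entropy$ term, although not controlled from above along the trajectory a priori, is harmless precisely because $\entropy$ is uniformly bounded below, and that injectivity of $A$ upgrades super-linearity of $\F$ to coercivity of $\gamma\mapsto\langle c,\gamma\rangle+\F(A\gamma)$ on the nonnegative orthant. Everything else (limit passage, the algebraic manipulation yielding $\entropy(\gamma(t_n))\le\entropy(\gamma^*)$, and the uniqueness of the minimal-entropy solution) is routine.
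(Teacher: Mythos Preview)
Your proof is correct and follows essentially the same approach as the paper: the $\mathcal{C}^1$ regularity via \Cref{prop:primal_dual} and \Cref{regularity of dual trajectory}, and the convergence via the optimality inequality, an accumulation-point argument, and the entropy comparison $\entropy(\tilde\gamma)\le\entropy(\gamma^*)$.

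The only minor difference is in the boundedness step. The paper combines the optimality of $\gamma(t)$ for \labelcref{UOT_t rescaled} with that of $\gamma^*$ for \labelcref{UOT} to obtain $\entropy(\gamma(t))\le\entropy(\gamma^*)$ for \emph{all} $t>0$, and then uses coercivity of $\entropy$ directly; this single inequality gives both the bound on the trajectory and the selection criterion at once. You instead bound $\entropy(\gamma(t))$ from below and invoke coercivity of the UOT objective (via super-linearity of $\F$ and the fact that $\|A\gamma\|$ controls $\|\gamma\|$ on the nonnegative orthant), deriving the entropy inequality only afterwards along the subsequence. Both routes are valid; the paper's is slightly more economical since it avoids the coercivity argument for the UOT functional. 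Note that your phrase ``$A$ is injective'' is literally false on all of $\R^{\mathcal X\times\mathcal Y}$ when $|\mathcal X|,|\mathcal Y|\ge 2$; what you actually need (and what you correctly use) is that $\|A\gamma\|_1=2\|\gamma\|_1$ for $\gamma\succeq 0$.
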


\begin{proof} We start by showing  that the trajectory is bounded. For any positive \(\varepsilon\), by optimality of \(\gamma(\varepsilon)\) for \labelcref{UOT_epsilon} we have that 
\begin{equation}
\label{ineq full}
\inp{c}{\gamma(\varepsilon)} + \F(A\gamma(\varepsilon)) + \varepsilon \entropy(\gamma(\varepsilon)) \leq \inp{c}{\bar\gamma} + \F(A\bar\gamma) +  \varepsilon \entropy(\bar\gamma).
\end{equation}
Since \(\bar\gamma\) is optimal for \labelcref{UOT}, we have
\[
\inp{c}{\bar\gamma} + \F(A\bar\gamma) \leq \inp{c}{\gamma(\varepsilon)} + \F(A\gamma(\varepsilon))
\]
and combining it with \labelcref{ineq full} we get
\begin{equation}
    \label{inequality on the entropy}
    \entropy(\gamma(\varepsilon)) \leq \entropy(\bar\gamma).
\end{equation}
By coercivity of \(\entropy\), we can conclude that the trajectory \((\gamma(\varepsilon))_{\varepsilon> 0}\) is bounded. It implies that there exists \((\varepsilon_n)_{n \in \mathbb{N}}\) converging to $0$ such that \(\gamma(\varepsilon_n) \to \tilde{\gamma}\) as \(n \to +\infty\), where \(\tilde{\gamma}\) is an accumulation point at $0$. Since $\entropy$ is lower bounded, $\entropy(\gamma(\varepsilon))$ is bounded, thus taking \(\varepsilon =\varepsilon_n\) in \cref{ineq full} and passing to the limit shows that \(\tilde{\gamma}\) is a solution of \labelcref{UOT}. Finally, using \Cref{inequality on the entropy}, taking \(\varepsilon=\varepsilon_n\) and passing to the limit, by lower semicontinuity of $\entropy$ we obtain
\[
\entropy(\tilde{\gamma}) \leq \entropy(\bar\gamma),
\]
which implies that \(\tilde{\gamma} = \bar\gamma\), and thus \(\gamma(\varepsilon) \to \bar\gamma\) as \(\varepsilon \to 0\).
Finally, the regularity of the trajectory is a direct consequence of \Cref{prop:primal_dual} and \Cref{regularity of dual trajectory}.
\end{proof}



\section{Asymptotics for the regularized unbalanced problem}\label{sec:asymptotics}
This section is devoted to an asymptotic analysis of the regularized unbalanced problem as \(\varepsilon\to 0\). We divide our study into two parts: we start with the dual problem and then we tackle the primal problem. The proofs are strongly inspired from \cite{cominetti1994asymptotic}, which dealt with linear problems with exponential penalization (corresponding to entropy penalization in the primal). We adapt their strategy to the non-linearity induced by the function \(\F\).

\subsection{Asymptotic analysis of the dual trajectory}
We start by introducing the linear space \(E_0 \defeq \spn\{A e_{x,y} \: \vert \:(x,y)\in I_0 \}\), where \(I_0\) is the set of saturated constraint as set in \Cref{I_0 and slack variable}. It corresponds to the space of marginals of (signed) measures concentrated on the set of saturated constraints.

\begin{lemma}
\label{DF*_in_E0}
    Under assumptions \myref{H0} - \myref{H4}, the quantity \(D\!\F^*(-\bar\xi)\) belongs to \(E_0\).
\end{lemma}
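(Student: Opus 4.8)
The plan is to pass to the limit $t\to+\infty$ in the first-order optimality condition for the regularized dual problem \labelcref{Dual UOT_t}, and then use complementary slackness (encoded in the exponential formula for $\gamma(t)$) to see that the limiting primal plan is concentrated on $I_0$.

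\textbf{Step 1: the optimality condition along the trajectory.} By optimality of $\xi(t)$ for the convex differentiable functional $\mathscr{K}_t$ (\Cref{regularity of dual trajectory}), one has $D\mathscr{K}_t(\xi(t)) = 0$. Since the $\xi$-gradient of $\xi=(\varphi,\psi)\mapsto \frac1t\sum_{(x,y)} e^{t(\varphi_x+\psi_y-c(x,y))}$ is $A\,e^{t(A^*\xi-c)}$, which by \Cref{prop:primal_dual} equals $A\gamma(t)$ at $\xi=\xi(t)$, while the $\xi$-gradient of $\xi\mapsto\F^*(-\xi)$ is $-D\F^*(-\xi)$, the condition $D\mathscr{K}_t(\xi(t))=0$ rewrites as
\[
D\F^*(-\xi(t)) = A\gamma(t), \qquad t>0.
\]

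\textbf{Step 2: passing to the limit.} I would now let $t\to+\infty$ in this identity. On the right-hand side, \Cref{convergence_primal_optimizer} gives $\gamma(t)\to\gamma^*$, hence $A\gamma(t)\to A\gamma^*$. On the left-hand side, \Cref{convergence_dual_optimizer} gives $\xi(t)\to\xi^*$; moreover assumption \labelcref{H3} forces $\F^*$ to have full domain (\Cref{domain F^*}) and \labelcref{H6} makes it $\mathcal{C}^2$ on the interior of its domain, so $D\F^*$ is continuous on all of $\R^{\mathcal X}\!\oplus\R^{\mathcal Y}$, whence $D\F^*(-\xi(t))\to D\F^*(-\xi^*)$. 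Therefore $D\F^*(-\xi^*) = A\gamma^*$.

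\textbf{Step 3: $\gamma^*$ is concentrated on $I_0$, and conclusion.} It remains to see that $\gamma^*_{x,y}=0$ for $(x,y)\notin I_0$. For such a pair, feasibility of $\xi^*$ ($A^*\xi^*\preceq c$) together with non-saturation gives $\kappa_{x,y}=c(x,y)-\varphi_x^*-\psi_y^*>0$, so $\varphi_x(t)+\psi_y(t)-c(x,y)\to-\kappa_{x,y}<0$ and
\[
0\le \gamma^*_{x,y} = \lim_{t\to+\infty} e^{t(\varphi_x(t)+\psi_y(t)-c(x,y))} = 0
\]
by exponential decay. Hence $A\gamma^* = \sum_{(x,y)\in I_0}\gamma^*_{x,y}\,Ae_{x,y}\in E_0$, which gives $D\F^*(-\xi^*)\in E_0$ as claimed. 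The only delicate point in the argument is the passage to the limit inside the gradient $D\F^*$, which is exactly why both \labelcref{H3} (full domain, so that $-\xi^*$ is automatically an interior point) and \labelcref{H6} ($\mathcal{C}^2$ regularity, hence a continuous gradient) are invoked; everything else reduces to continuity of $A$ and the already-established convergence of the primal and dual trajectories.
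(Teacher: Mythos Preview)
Your proof is correct and follows essentially the same route as the paper: write the first-order optimality condition for $\mathscr{K}_t$ at $\xi(t)$, pass to the limit using the convergence of $\xi(t)$ and $\gamma(t)$, and observe that $\gamma^*$ is supported on $I_0$ via the exponential primal--dual formula. You are in fact slightly more careful than the paper in justifying why $D\F^*$ is continuous at $-\xi^*$ (full domain from \labelcref{H3} via \Cref{domain F^*}, and $\mathcal{C}^2$ regularity from \labelcref{H6}).
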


\begin{proof}
Consider the curve \( \varepsilon \mapsto \xi(\varepsilon) \) of solutions to \labelcref{DUOT_epsilon}. By \myref{H0}, \myref{H2}, \myref{H4} and \Cref{domain F^*}, \(\F^*\) is differentiable on \(\R^{\mathcal X} \oplus \R^{\mathcal Y}\), thus the first order optimality condition reads as:
\begin{align*} 
D\!\mathscr{K}_\varepsilon(\xi(\varepsilon)) &= 0 \\
\Longleftrightarrow
D\!\F^*\!\left(-\xi(\varepsilon)\right) + \sum_{(x,y)\in \mathcal{X}\!\times\!\mathcal{Y}}\gamma_{x,y}(\varepsilon) A e_{x,y} &= 0.
\end{align*}
By \Cref{convergence_dual_optimizer} and \Cref{convergence_primal_optimizer}, \(\xi(\varepsilon) \to \bar\xi\) and \(\gamma(\varepsilon) \to \bar\gamma\) as \(\varepsilon\to 0\), thus taking the limit in the above expression leads to
\[
D\!\F^*(-\bar\xi) + \sum_{(x,y)\in I_0}\bar\gamma_{x,y} A e_{x,y} =0,
\]
 where we have used the fact that for any \((x,y)\notin I_0\), one has
\[\gamma_{x,y}(\varepsilon)=e^{\frac{1}{\varepsilon}\inp{A^*\xi(\varepsilon)-c}{e_{x,y}}} \underset{\varepsilon \to 0}{\longrightarrow} 0.\]
From this, we conclude that \(D\!\F^*(-\bar\xi)\in E_0\).
\end{proof}

For any positive \(\varepsilon\), we consider the quantity 
\[d(\varepsilon)\defeq \tfrac{1}{\varepsilon}(\xi(\varepsilon)-\bar\xi)\] and we look at \(\mathscr{K}_\varepsilon\) as a function of \(d\) instead of \(\xi\), namely we introduce the new   functional \(\tilde{\mathscr{K}}_\varepsilon\!: \R^\mathcal{X}\!\oplus \R^\mathcal{Y} \to \R\) defined as
\[\tilde{\mathscr{K}}_\varepsilon(d) \defeq \F^*\left(-\varepsilon d - \bar\xi\right) + \!\sum_{(x,y)\in \mathcal{X}\!\times \!\mathcal{Y}}\!\varepsilon e^{\inp{A^* d - \frac{1}{\varepsilon} \kappa}{e_{x,y}}}, \]
where \(\kappa\) is the gap between \(A\bar\xi\) and \(c\) as defined in \Cref{I_0 and slack variable}. Notice that it is defined in such a way that \(\tilde{\mathscr{K}}_\varepsilon(\frac{1}{\varepsilon}(\xi-\bar\xi)) = \mathscr K_\varepsilon(\xi)\).
\begin{proposition}\label{boundedness_d}
     Under assumptions  \myref{H0}-\myref{H4}, the curve \(\varepsilon \mapsto d(\varepsilon)\) is bounded for \(\varepsilon\) sufficiently small.
\end{proposition}

\begin{proof} Let \(\varepsilon>0\). Notice that for any \((x,y) \in I_0\), one has
\[ \inp{A^*d(\varepsilon)}{e_{x,y}} = \frac{1}{\varepsilon}\inp{A^*\xi(\varepsilon)-A^*\bar\xi}{e_{x,y}}  = \frac{1}{\varepsilon}\inp{A^*\xi(\varepsilon) - c}{e_{x,y}},\]
and thus by \Cref{convergence_primal_optimizer},
\[\inp{A^*d(\varepsilon)}{e_{x,y}} = \log \gamma_{x,y}(\varepsilon) \xrightarrow[\varepsilon\to 0]{} \log \bar\gamma_{x,y}.\]
Since $\bar\gamma$ has finite entropy and $\bar\gamma \succ 0$, we deduce from this and from the regularity of \(\xi(\varepsilon)\) (stated in \Cref{regularity of dual trajectory}), that \( \inp{A^*d(\varepsilon)}{e_{x,y}} \) is a bounded quantity on \((0,1)\) for any \((x,y)\in I_0\), which in turn implies that \(d^{(0)}(\varepsilon)\), the projection of \(d(\varepsilon)\) onto \(E_0\), is bounded. Besides, since \(\tilde{\mathscr{K}}_\varepsilon(\frac{1}{\varepsilon}(\xi-\bar\xi)) = \mathscr K_\varepsilon(\xi)\) and \(\xi(\varepsilon)\) is the unique minimizer of \(\mathscr K_\varepsilon\), \(d(\varepsilon)\) is the unique minimizer of \(\tilde{\mathscr K}_\varepsilon\). Comparing it with \(d^{(0)}(\varepsilon)\) yields
\[
\tilde{\mathscr{K}}_\varepsilon(d(\varepsilon)) \leq \tilde{\mathscr{K}}_\varepsilon(d^{(0)}(\varepsilon)),
\]
which implies
\begin{multline*}
    \F^*\left(-\varepsilon d(\varepsilon) - \bar\xi\right) + \sum_{(x,y)\in I_0}\!\varepsilon e^{\inp{A^*d(\varepsilon) -\frac{1}{\varepsilon}\kappa}{e_{x,y}}}\\
    \leq \F^*\left(-\varepsilon d^{(0)}(\varepsilon) - \bar\xi\right) + \sum_{(x,y)\in \mathcal X \times \mathcal Y} \varepsilon e^{\inp{A^*d^{(0)}(\varepsilon) -\frac{1}{\varepsilon}\kappa}{e_{x,y}}}.
\end{multline*}
Using that \(\inp{A^*d(\varepsilon)}{e_{x,y}} = \inp{d(\varepsilon)}{A e_{x,y}} = \inp{A^*d^{(0)}(\varepsilon)}{e_{x,y}}\) for any \((x,y)\in I_0\), we obtain
\begin{equation}\label{eq_rate_F*_1}
    \F^*\!\left(-\varepsilon d(\varepsilon) - \bar\xi\!\right) \leq  \F^*\!\left(-\varepsilon d^{(0)}(\varepsilon) - \bar\xi\!\right) + \!\sum_{(x,y)\notin I_0}\!\varepsilon e^{\inp{A^*d^{(0)}(\varepsilon)- \frac{1}{\varepsilon}\kappa}{e_{x,y}}}.
\end{equation}
Notice that \(\varepsilon d(\varepsilon) = \xi(\varepsilon) - \bar\xi \to 0\) and \(-\bar\xi \in \domain \F^*\), because $\F^*$ has full domain thanks to \Cref{domain F^*}. Thus, by \myref{H3}, for some \(r \in (0,1)\), for all $\varepsilon \leq r$ we have
\begin{equation}\label{eq_rate_F*_2}
    \F^*\left(-\varepsilon d(\varepsilon) - \bar\xi\right) \geq \F^*\left( - \bar\xi\right) - \varepsilon \inp{D\!\F^*\left(-\bar\xi\right)}{d(\varepsilon)} + \varepsilon^2\frac{\lambda_0(-\bar\xi,r)}{2}  \norm{d(\varepsilon)}^2,
\end{equation}
where $\lambda_0(-\bar\xi,r) > 0$, and by \myref{H4} and Taylor's expansion, we have
\begin{equation}\label{eq_rate_F*_3}
    \F^*\left(-\varepsilon d^{(0)}(\varepsilon) - \bar\xi\right) \leq \F^*\left( -\bar\xi\right) -\varepsilon \inp{D\F^*\left(-\bar\xi\right)}{d^{(0)}(\varepsilon)} + C \varepsilon^2
\end{equation}
for some \(C > 0\) and \(\varepsilon\) sufficiently small. By \Cref{DF*_in_E0}, we have 
\[ \inp{D\!\F^*\!\left( -\bar\xi \right)}{d(\varepsilon)} = \inp{D\!\F^*\!\left( -\bar\xi \right)}{d^{(0)}(\varepsilon)},\]
and combining this with \labelcref{eq_rate_F*_1}, \labelcref{eq_rate_F*_2} and \labelcref{eq_rate_F*_3}, we obtain
\[
       \lambda_0 \norm{d(\varepsilon)}^2 \leq  \sum_{(x,y)\notin I_0} \frac{2}{\varepsilon} e^{\inp{A^*d^{(0)}(\varepsilon) - \frac{1}{\varepsilon}\kappa}{e_{x,y}}} + C.
\]
We have seen that \(d^{(0)}(\varepsilon)\) is bounded, and since \(\kappa_{x,y} > 0\) for every \((x,y) \not\in I_0\), the right-hand side is bounded, thus \(d(\varepsilon)\) is bounded as well for \(  \varepsilon \leq r \).
\end{proof}

\begin{proposition}\label{convergence_d}
Under assumptions \myref{H0}-\myref{H4}, the trajectory \(\varepsilon\mapsto d(\varepsilon)\) converges towards a finite quantity denoted \(\bar d\).
\end{proposition}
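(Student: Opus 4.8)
The plan is to split the trajectory as $d(t)=d^{(0)}(t)+d^{(1)}(t)$, where $d^{(0)}(t)$ and $d^{(1)}(t)$ denote the orthogonal projections of $d(t)$ onto $E_0$ and $E_0^{\perp}$, and to treat the two components separately. For the component in $E_0$: if $(x,y)\in I_0$ then $\kappa_{x,y}=0$, so by \Cref{prop:primal_dual}
\[
\langle A^{*}d(t)\mid e_{x,y}\rangle=t\,\langle A^{*}\xi(t)-c\mid e_{x,y}\rangle=\log\gamma_{x,y}(t).
\]
By \Cref{boundedness_d} the left-hand side is bounded on $\R_{+}\setminus(0,\varepsilon)$, hence $\gamma_{x,y}^{*}=\lim_{t\to+\infty}\gamma_{x,y}(t)>0$ (using \Cref{convergence_primal_optimizer}) and $\langle A^{*}d(t)\mid e_{x,y}\rangle\to\log\gamma_{x,y}^{*}$. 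Since $d^{(0)}(t)\in E_0=\spn\{Ae_{x,y}:(x,y)\in I_0\}$ and, for $(x,y)\in I_0$, $\langle d^{(0)}(t)\mid Ae_{x,y}\rangle=\langle d(t)\mid Ae_{x,y}\rangle$, the linear map $E_0\ni v\mapsto(\langle v\mid Ae_{x,y}\rangle)_{(x,y)\in I_0}$ is injective and (being linear on a finite-dimensional space) sends the sequence $d^{(0)}(t)$, whose image converges, to a convergent family; therefore $d^{(0)}(t)$ converges to some $(d^{*})^{(0)}\in E_0$.

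For the component in $E_0^{\perp}$, observe that for $w\in E_0^{\perp}$ the sum $\sum_{(x,y)\in I_0}\frac1t e^{\langle d^{(0)}(t)+w\mid Ae_{x,y}\rangle}$ appearing in $\tilde{\mathscr K}_t(d^{(0)}(t)+w)$ does not depend on $w$, because $Ae_{x,y}\in E_0$ when $(x,y)\in I_0$. Since $\tilde{\mathscr K}_t$ is convex and $d(t)=d^{(0)}(t)+d^{(1)}(t)$ is its minimizer, $d^{(1)}(t)$ minimizes over $E_0^{\perp}$ the functional
\[
\Phi_t(w)=\F^{*}\!\Big(-\tfrac1t\big(d^{(0)}(t)+w\big)-\xi^{*}\Big)+\sum_{(x,y)\notin I_0}\tfrac1t\,e^{\langle d^{(0)}(t)+w\mid Ae_{x,y}\rangle-t\kappa_{x,y}},
\]
and hence also the rescaled functional $\psi_t(w)\defeq t^{2}\Phi_t(w)-t^{2}\F^{*}(-\xi^{*})+t\,\langle D\F^{*}(-\xi^{*})\mid d^{(0)}(t)\rangle$, the two subtracted terms being independent of $w$. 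Under \labelcref{H6} and \Cref{domain F^*} the function $\F^{*}$ is globally $\mathcal{C}^2$, so I expand it at $-\xi^{*}$ to second order: the resulting $O(t)$ contribution is $-t\,\langle D\F^{*}(-\xi^{*})\mid w\rangle$, which vanishes by \Cref{DF*_in_E0} since $w\in E_0^{\perp}$; the Taylor remainder is $o(1)$ uniformly for $w$ in bounded sets; and the sum over $(x,y)\notin I_0$ is $O\big(t\,e^{-t\kappa_{\min}}\big)\to 0$ uniformly for $w$ in bounded sets, because $\kappa_{x,y}>0$ there and $d^{(0)}(t)$ is bounded. Combining this with $d^{(0)}(t)\to(d^{*})^{(0)}$ yields $\psi_t\to\psi_\infty$ uniformly on bounded subsets of $E_0^{\perp}$, where $\psi_\infty(w)=\tfrac12\big\langle D^{2}\F^{*}(-\xi^{*})\big((d^{*})^{(0)}+w\big)\,\big|\,(d^{*})^{(0)}+w\big\rangle$. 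By \labelcref{H5} together with the $\mathcal{C}^2$ regularity, $D^{2}\F^{*}(-\xi^{*})$ is positive definite, so $\psi_\infty$ is a strictly convex coercive quadratic on $E_0^{\perp}$ with a unique minimizer $(d^{*})^{(1)}$.

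To conclude: $d^{(1)}(t)$ is bounded on $\R_{+}\setminus(0,\varepsilon)$ by \Cref{boundedness_d}, so it has accumulation points as $t\to+\infty$; for any such point $\bar w$, the minimality of $d^{(1)}(t)$ for $\psi_t$ together with the (uniform, hence pointwise) convergence $\psi_t\to\psi_\infty$ gives $\psi_\infty(\bar w)\le\psi_\infty(w)$ for every $w\in E_0^{\perp}$, whence $\bar w=(d^{*})^{(1)}$; a bounded trajectory with a unique accumulation point converges, so $d^{(1)}(t)\to(d^{*})^{(1)}$ and $d(t)\to d^{*}\defeq(d^{*})^{(0)}+(d^{*})^{(1)}$. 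The delicate point is the $E_0^{\perp}$ component and the choice of scaling: at scale $1/t$ the functional $\tilde{\mathscr K}_t$ is asymptotically constant in the $E_0^{\perp}$ direction, which forces the $t^{2}$ rescaling; the resulting $O(t)$ term would diverge were it not killed exactly by \Cref{DF*_in_E0}, after which the limiting functional is governed by $D^{2}\F^{*}(-\xi^{*})$ — this is where \labelcref{H5}--\labelcref{H6} are genuinely needed — and one must also verify that the exponential terms recording the non-saturated constraints stay negligible at this scale, which holds thanks to the strict positivity of $\kappa$ off $I_0$ (with $\kappa_{\min}\defeq\min_{(x,y)\notin I_0}\kappa_{x,y}>0$).
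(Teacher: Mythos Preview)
Your proof is correct and arrives at the same characterization of the limit as the paper, but the route is genuinely different. The paper first passes to the limit in the first-order optimality condition $D\tilde{\mathscr K}_t(d(t))=0$ to show that every accumulation point minimizes the auxiliary functional
\[
\mathrm G(z)=\langle D\F^*(-\xi^*)\mid z\rangle+\sum_{(x,y)\in I_0}e^{\langle A^*z\mid e_{x,y}\rangle},
\]
then invokes an abstract lemma (their \Cref{set of solutions}) to identify $\argmin\mathrm G$ as an affine subspace of direction $E_0^\perp$, and finally singles out the accumulation point by a second-order comparison of $\tilde{\mathscr K}_{t_n}$ at $d(t_n)$ versus a shifted competitor $d(t_n)+d_1^*-d^*$, which yields that $d^*$ minimizes $\|\cdot\|_{D^2\F^*(-\xi^*)}$ over this affine subspace. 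You bypass $\mathrm G$ and the abstract lemma entirely: the orthogonal split $d(t)=d^{(0)}(t)+d^{(1)}(t)$ lets you prove convergence of the $E_0$-component directly from the primal--dual relation (and in fact extract the byproduct $\gamma_{x,y}^*>0$ on $I_0$), and for the $E_0^\perp$-component you set up a $t^2$-rescaled functional $\psi_t$ and argue by uniform convergence on bounded sets to the strictly convex quadratic $\psi_\infty$. The two approaches land on the same limit---your $\psi_\infty$ is exactly the paper's $\|\cdot\|_{D^2\F^*(-\xi^*)}^2$ restricted to the correct affine subspace---but yours is more modular and variational in flavor, while the paper's stays closer to the Cominetti--San~Mart\'in template of first-order limit followed by second-order selection.
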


Before going into the proof, we introduce the following useful lemma.

\begin{lemma}
    \label{set of solutions}
Let \(E'\) be a normed vector space, \(f\!: E' \to\R\) a strictly convex function and \(L\!:E\to E' \) a linear operator. If the set of solutions \(\mathcal{U}\) of the problem 
\[\inf_{x\in E} f(Lx)\]
is nonempty, then for any \(x_0 \in \mathcal{U}\), one has \[\mathcal{U} = x_0 + \ker L.\]
\end{lemma}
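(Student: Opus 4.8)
The plan is to prove the two inclusions separately, the easy one being $x_0 + (\vec F \cap \ker L) \subseteq \mathcal U$ and the harder (though still elementary) one being $\mathcal U \subseteq x_0 + (\vec F \cap \ker L)$, with the strict convexity of $f$ doing the real work in the second.

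First I would fix $x_0 \in \mathcal U$ and let $m \defeq \inf_{x\in F} f(Lx) = f(Lx_0)$. For the inclusion $\supseteq$: take $x = x_0 + h$ with $h \in \vec F \cap \ker L$. Then $x \in F$ because $\vec F$ is the direction of the affine subspace $F$, and $Lx = Lx_0 + Lh = Lx_0$ since $h \in \ker L$; hence $f(Lx) = f(Lx_0) = m$, so $x \in \mathcal U$.

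For the reverse inclusion $\subseteq$: let $x \in \mathcal U$, so $f(Lx) = m$ and $x \in F$, which already gives $x - x_0 \in \vec F$. It remains to show $x - x_0 \in \ker L$, i.e. $Lx = Lx_0$. Consider the midpoint $z \defeq \tfrac12(x + x_0)$, which lies in $F$ by convexity of the affine subspace, so $f(Lz) \geq m$. On the other hand, by linearity of $L$ and convexity of $f$,
\[
f(Lz) = f\!\left(\tfrac12 Lx + \tfrac12 Lx_0\right) \leq \tfrac12 f(Lx) + \tfrac12 f(Lx_0) = m.
\]
Hence equality holds throughout the convexity inequality; since $f$ is \emph{strictly} convex, equality in $f\big(\tfrac12 a + \tfrac12 b\big) \le \tfrac12 f(a) + \tfrac12 f(b)$ forces $a = b$, so $Lx = Lx_0$. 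Therefore $x - x_0 \in \ker L$, and combined with $x - x_0 \in \vec F$ we get $x - x_0 \in \vec F \cap \ker L$, i.e. $x \in x_0 + (\vec F \cap \ker L)$.

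There is no genuine obstacle here: the only subtlety is to make sure the midpoint argument is phrased so that strict convexity applies directly (one must compose with $L$ first and use that $f$ is strictly convex on $E'$, not assume anything about $f\circ L$, which need not be strictly convex when $L$ has a kernel). The rest is bookkeeping with affine subspaces.
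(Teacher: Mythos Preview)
Your proof is correct and takes essentially the same approach as the paper: both establish the two inclusions separately, with the nontrivial direction handled by the midpoint argument and strict convexity of $f$ forcing $Lx = Lx_0$. The paper phrases the key step as a contradiction (assume $Lx_1 \neq Lx_0$ and derive a strictly smaller value at the midpoint), whereas you phrase it as equality in the convexity inequality forcing equality of the arguments; these are the same reasoning, and your remark about applying strict convexity to $f$ rather than to $f\circ L$ is a useful clarification.
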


\begin{proof}
Let \(x_0 \in \mathcal{U}\). If \(x_1 = x_0 + y\), with \(y \in  \ker L\), then $Lx_0 = Lx_1$ thus \(f(Lx_1) = f(Lx_0)\), and it follows that \(x_1 \in \mathcal{U}\).

Conversely, let \(x_1 \in \mathcal{U}\). We set $y = x_1-x_0$ and $x_2 = \frac 12(x_0+x_1) = x_0 + \frac 12 y$. Assuming that \(y \notin \ker L\), \(L x_1 \neq Lx_2\) and by strict convexity of \(f\) we have  
\[
f(Lx_0) = \frac{1}{2} f(Lx_0) + \frac{1}{2} f(Lx_1) > f(Lx_2),
\]
where \(x_2 \defeq (x_0+x_1)/2 = x_0 + \frac{1}{2} y\). This contradicts the optimality of \(x_0\), therefore \(y \in \ker L\) and $x_1 \in x_0 + \ker L$.
\end{proof} 

\begin{proof}[Proof of \Cref{convergence_d}]
Let \( \bar d \) be an accumulation point at \(0\) of the curve \(\varepsilon\mapsto d(\varepsilon)\), meaning that \( d(\varepsilon_n) \to \bar d \) as \( n \to +\infty \) for some sequence \(\varepsilon_n \to 0\), which exists thanks to \Cref{boundedness_d}. Since \( \xi(\varepsilon_n) \) is a minimizer of \labelcref{DUOT_epsilon} for $\varepsilon=\varepsilon_n$, we obtain the following equation:
\[
D\!\F^*\!\left(-\varepsilon_n d(\varepsilon_n)-\bar\xi\right) + \sum_{(x,y)\in \mathcal{X}\!\times\!\mathcal{Y}}e^{\inp{A^*d(\varepsilon_n) - \frac{1}{\varepsilon_n} \kappa}{e_{x,y}}}A e_{x,y} = 0.
\]
Taking the limit in this expression leads to
\[
D\!\F^*(-\bar\xi) + \sum_{(x,y)\in I_0}e^{\inp{A^*\bar d}{e_{x,y}}}A e_{x,y}=0.
\]
We recognize that is the first order optimality condition of the following differentiable convex function,
\[
\mathrm{G} : z\in\R^\mathcal{X}\!\oplus \R^\mathcal{Y} \mapsto \inp{D\!\F^*(-\bar\xi)}{z} +  \sum_{(x,y)\in I_0}e^{\inp{A^*z}{e_{x,y}}},
\]
so that \(\bar d\) is a minimizer of $\mathrm{G}$. Notice that by \Cref{DF*_in_E0}, \(\inp{D\!\F^*(-\bar\xi)}{z}\) is a linear function of the quantities \(\inp{A^* z}{e_{x,y}}\) for $(x,y) \in I_0$, and since the exponential is strictly convex,  we may apply \Cref{set of solutions} to assert that the set of optimal solutions \( \mathcal{U} \) of \(\mathrm{G}\) is given by
\[
\mathcal{U}= \bar d + \bigcap\limits_{(x,y)\in I_0} \ker Ae_{x,y}. 
\]
Now, let us introduce \( \bar d_1\in \mathcal{U} \) such that \( \bar d_1 \neq \bar d \). We define \( d_1(\varepsilon) \) by
\[
d_1(\varepsilon) \defeq d(\varepsilon) + \bar d_1 - \bar d
\]
which satisfies \( d_1(\varepsilon_n) \rightarrow \bar d_1 \) as \( n\rightarrow +\infty \). 

We know that \( d(\varepsilon) \) is a minimizer solution of the functional \( \tilde{\mathscr K}_\varepsilon \), thus
\[\tilde{\mathscr{K}}_\varepsilon(d(\varepsilon_n)) \leq \tilde{\mathscr{K}}_\varepsilon(d_1(\varepsilon_n)),\]
or equivalently
\begin{multline*}
    \F^*\left(-\varepsilon_n d(\varepsilon_n) - \bar\xi\right) + \sum_{(x,y)\in \mathcal X \times \mathcal Y}\!\varepsilon_n e^{\inp{A^*d(\varepsilon_n)}{e_{x,y}}}\\
    \leq \F^*\left(-\varepsilon d_1(\varepsilon_n) - \bar\xi\right) + \sum_{(x,y)\in \mathcal X \times \mathcal Y}\varepsilon_n e^{\inp{A^*d_1(\varepsilon_n)}{e_{x,y}}}.
\end{multline*}
Since \(d(\varepsilon_n)\) and \(d_1(\varepsilon_n)\) are bounded, we can write the second order Taylor expansion of \(\F^*\) at \(-\bar\xi\) to get
\begin{multline*}
    \frac{\norm{d(\varepsilon_n)}_{D^2 \F^*(-\bar\xi)}^2}2 + \sum_{(x,y)\in \mathcal X \times \mathcal Y}\!\frac{1}{\varepsilon_n}e^{\inp{A^*d(\varepsilon_n) -\frac{1}{\varepsilon_n}\kappa}{e_{x,y}}}\\
    \leq \frac{\norm{d_1(\varepsilon_n)}_{D^2 \F^*(-\bar\xi)}^2}2 + \sum_{(x,y)\in \mathcal X \times \mathcal Y}\frac{1}{\varepsilon_n} e^{\inp{A^*d_1(\varepsilon_n) -\frac{1}{\varepsilon_n}\kappa}{e_{x,y}}} + o(1),
\end{multline*}
where we have denoted $\norm{u}_A \defeq \inp{Au}u$ and omitted the first-order terms, which cancelled thanks to \Cref{DF*_in_E0}. Besides, thanks to the fact that \(d(\varepsilon_n)-d_1(\varepsilon_n) \in \bigcap_{(x,y)\in I_0} \ker Ae_{x,y}\), the terms corresponding to \((x,y) \in I_0\) in the sum are equal on both sides. By non-negativity of the exponential, we forget the terms corresponding to \((x,y) \not\in I_0\) in the left-hand side, and we get:

\[
    \| d(\varepsilon_n) \|_{D^2\!\F^*\!(-\bar\xi)}^2 \leq  \| d_1(\varepsilon_n) \|_{D^2\!\F^*\!(-\bar\xi)}^2 + \!\sum\limits_{(x,y)\notin I_0}\!2\varepsilon_n e^{\inp{Ad_1(\varepsilon_n) - \frac{1}{\varepsilon_n}\kappa}{e_{x,y}}} + o(1).
\]
Passing to the limit, we obtain
\[
\| \bar d \|_{D^2\!\F^*\!(-\bar\xi)}^2 \leq \| \bar d_1 \|_{D^2\!\F^*\!(-\bar\xi)}^2.
\]
Thus, we have shown that \( \bar d \) is the unique solution of the strictly convex (by \myref{H3}) problem:
\[
\inf\limits_{z\in \mathcal{U}}\: \| z \|_{D^2\!\F^*\!(-\bar\xi)}^2.
\]
We have shown that $d(\varepsilon)$ has a unique accumulation point at $0$, and since it is bounded on a neighborhood of $0$, it converges to $\bar d$.
\end{proof}

Now, we shall study the derivative of \(d(\varepsilon)\) with respect to \(\varepsilon\) which is equal to \(\dot{d}(\varepsilon)= -\frac{1}{\varepsilon} d(\varepsilon) +\frac{1}{\varepsilon} \dot{\xi}(\varepsilon)\). 

\begin{theorem}
\label{main theorem 2}
Under assumptions \myref{H0}-\myref{H4}, the convergence rate of the curve \( \varepsilon \mapsto \xi(\varepsilon) \) is at least of order \( O(\varepsilon) \), i.e. for any \(\varepsilon\) sufficiently close to \(0\), one has 
\[
\| \xi(\varepsilon) - \bar\xi \|  \lesssim \varepsilon.
\]
\end{theorem}

\begin{proof} Let \( \varepsilon > 0 \). We define a functional \( \Lambda_\varepsilon \) by
\[
\Lambda_\varepsilon(\sigma) \defeq \norm*{A^*\sigma +\tfrac{c - A^*\bar\xi}{\varepsilon^2}}_{\diagonal(\gamma(\varepsilon))}^2 + \varepsilon \norm*{ \sigma + \frac{d(\varepsilon)}{\varepsilon}}_{D^2 \!\F^*\!(-\xi(\varepsilon))}^2.
\]
Note that \( \dot{d}(\varepsilon) \) is optimal for \( \Lambda_\varepsilon \), since the latter is convex and 
\begin{align*}
    D\Lambda_\varepsilon(\sigma) &= 2A\diagonal(\gamma(\varepsilon))\left(A^*\sigma + \frac{c - A^*\bar\xi}{\varepsilon^2}\right) + 2 \varepsilon D^2\!\F^*\!(-\xi(\varepsilon))\left(\sigma + \frac{d(\varepsilon)}{\varepsilon}\right)
\shortintertext{and thus, using the ODE satisfied by $\varepsilon\mapsto \xi(\varepsilon)$ and established in \Cref{proposition_ODE},}
    D\Lambda_\varepsilon(\dot{d}(\varepsilon)) &= \frac{2}{\varepsilon}\left(A\diagonal(\gamma(\varepsilon))A^*\dot{\xi}(\varepsilon) + \varepsilon D^2\!\F^*\!(-\xi(\varepsilon))\dot{\xi}(\varepsilon) - A \diagonal(\gamma(\varepsilon))\frac{A^*\xi(\varepsilon)-c}{\varepsilon}\right)=0.
\end{align*}
Using this property, we get the following,
\begin{align*}
 & &
    \Lambda_\varepsilon(\dot{d}(\varepsilon)) &\leq \Lambda_\varepsilon(0) \\
    &\iff& \norm*{A^*\frac{\dot\xi(\varepsilon)}{\varepsilon}}_{\diagonal(\gamma(\varepsilon))}^2 + \varepsilon \norm*{ \frac{\dot\xi(\varepsilon)}{\varepsilon}}_{D^2 \!\F^*\!(-\xi(\varepsilon))}^2 &\leq \norm*{\tfrac{c - A^*\bar\xi}{\varepsilon^2}}_{\diagonal(\gamma(\varepsilon))}^2 + \varepsilon \norm*{ \frac{d(\varepsilon)}{\varepsilon}}_{D^2 \!\F^*\!(-\xi(\varepsilon))}^2.
\end{align*}
This in turn implies
\begin{align*}
    & &
\varepsilon \norm*{\frac{\dot{\xi}(\varepsilon)}{\varepsilon}}_{D^2\!\F^*\!(-\xi(\varepsilon))}^2 &\leq \sum_{(x,y)\notin I_0}\tfrac{1}{\varepsilon^2} \gamma_{x,y}(\varepsilon)\inp{c - A^*\bar\xi}{e_{x,y}}^2 + \varepsilon \norm*{ \frac{\bar\xi - \xi(\varepsilon)}{\varepsilon^2}}_{D^2\!\F^*\!(-\xi(\varepsilon))}^2 \\
    &\implies&
    \| \dot{\xi}(\varepsilon) \|_{D^2\!\F^*\!(-\xi(\varepsilon))}^2 &\leq \sum_{(x,y)\notin I_0} \tfrac{1}{\varepsilon} \gamma_{x,y}(\varepsilon)\inp{c - A^*\bar\xi}{e_{x,y}}^2 + \tfrac{1}{\varepsilon^2} \| \bar\xi - \xi(\varepsilon)  \|_{D^2\!\F^*\!(-\xi(\varepsilon))}^2 \\
&\implies& 
\| \dot{\xi}(\varepsilon) \|^2 &\leq \frac{C_1}{\varepsilon \lambda_{\min}(\varepsilon)} \max_{(x,y)\notin I_0}\gamma_{x,y}(\varepsilon) + \frac{\lambda_{\max}(\varepsilon)}{\lambda_{\min}(\varepsilon)} \| d(\varepsilon) \|^2,
\end{align*}
where \( C_1 \defeq \sum_{(x,y)\notin I_0} \kappa_{x,y}^2 \) and \(\lambda_{\min}(\varepsilon)>0\) and \(\lambda_{\max}(\varepsilon)<+\infty\) are respectively the smallest and the largest eigenvalues of \(D^2\!\F^*\!(-\xi(\varepsilon))\). We have already seen that the trajectory \( \varepsilon \mapsto d(\varepsilon) \) is bounded for \( \varepsilon \leq r\) for some small $r>0$ thus there exists \( C_2 > 0 \) such that for $\varepsilon \leq r$, $\norm{d(\varepsilon)} \leq C_2$, and for any \( (x,y) \notin I_0 \)
\[
\gamma_{x,y}(\varepsilon) = e^{\frac{1}{\varepsilon}\inp{A^*\xi(\varepsilon) - c}{e_{x,y}}} = e^{\inp{A^*d(\varepsilon)}{e_{x,y}}} e^{-\frac{1}{\varepsilon}\kappa_{x,y}} \leq C_2 e^{-\frac{1}{\varepsilon}\kappa^*},
\]
where \(\kappa^*\defeq\min\limits_{(x,y)\notin I_0} \kappa_{x,y}\). One obtains  
\[
\| \dot{\xi}(\varepsilon) \|^2 \leq  \tfrac{C}{\varepsilon} e^{-\frac{1}{\varepsilon}\kappa^*} +  K,
\]
where  
\[
C \defeq \sup_{0<\varepsilon \leq r} \frac{C_1 C_2}{\lambda_{\min}(\varepsilon)}<+\infty \quad \text{and} \quad K \defeq \sup_{0<\varepsilon \leq r} C_2^2 \frac{\lambda_{\max}(\varepsilon)}{\lambda_{\min}(\varepsilon)}  <+\infty,
\]
these quantities being finite because $\F^*$ is of class $\mathcal C^2$ by \myref{H4}. Moreover, \(\varepsilon \mapsto \tfrac{1}{\varepsilon} e^{-\frac{1}{\varepsilon}\kappa^*}\) converges exponentially fast towards \(0\) and thus \(\tfrac{1}{\varepsilon} e^{-\frac{1}{\varepsilon}\kappa^*} \lesssim 1\) when \(\varepsilon\) is sufficiently close to \(0\). Consequently, we have as \(\varepsilon\) goes to \(0\)
\[
\| \dot{\xi}(\varepsilon) \|  \lesssim  1,
\]
which leads to
\[
    \| \bar\xi - \xi(\varepsilon) \|  \leq \int_0^{\varepsilon} \| \dot{\xi}(\omega)\| \,\dd\omega \lesssim \int_0^{\varepsilon}  \,\dd\omega \lesssim \varepsilon . 
\]
\end{proof}

\subsection{Asymptotic analysis of the primal trajectory}
\begin{theorem}
\label{main theorem}
Under assumptions \myref{H0}-\myref{H4}, the convergence rate of the curve \(\varepsilon \mapsto \gamma(\varepsilon)\) is at least of order \(O(\sqrt{\varepsilon})\), i.e. for any \(\varepsilon\) close to \(0\), one has
\[\| \gamma(\varepsilon) - \bar\gamma\| \lesssim \sqrt{\varepsilon}. \]
\end{theorem}

\begin{proof} Let \(\varepsilon>0\) sufficiently small in order to have \(\| \xi(\varepsilon) -\bar\xi\| \lesssim  \varepsilon \) as shown in \Cref{main theorem 2}. We have seen in \Cref{proposition_ODE} that $\dot\xi$ solves \labelcref{ODE}, which can be recognized as the first-order optimality condition of the following convex optimization problem
\[\inf_{\R^\mathcal{X}\!\oplus \R^\mathcal{Y}} \Psi_\varepsilon(\xi) \defeq \inf\limits_{\xi\in \R^\mathcal{X}\!\oplus \R^\mathcal{Y}} \| A^*\xi -\log\gamma(\varepsilon)\|^2_{\diagonal(\gamma(t))}  + \varepsilon \| \xi\|^2_{ D^2\!\F^*\!(-\xi(\varepsilon))}.\]
We compare $\dot\xi(t)$ with the competitor \(h(\varepsilon)\defeq \tfrac{1}{\varepsilon}(\xi(\varepsilon) - \bar\xi)\) to obtain
\begin{align*}
    \Psi_\varepsilon(\dot{\xi}(\varepsilon)) &\leq \Psi_\varepsilon(h(\varepsilon))\\
     &\leq \sum_{(x,y)\in \mathcal{X}\times\mathcal{Y}}  \gamma_{x,y}(\varepsilon)\left(\inp{A^*h(\varepsilon)}{e_{x,y}}  - \log\gamma_{x,y}(\varepsilon)\right)^2 + \varepsilon \| h(\varepsilon)\|^2_{D^2\mathrm{F}^*(-\xi(\varepsilon))} \\   
   &= \sum_{(x,y)\in \mathcal{X}\times\mathcal{Y}} \tfrac{1}{\varepsilon^2}\gamma_{x,y}(\varepsilon)\inp{A^*\xi(\varepsilon)- A^*\bar\xi -A^*\xi(\varepsilon)+c}{e_{x,y}}^2 + \tfrac{1}{\varepsilon}\| \xi(\varepsilon)-\bar\xi \|^2_{D^2\mathrm{F}^*(-\xi(\varepsilon))} \\
    &\leq \sum_{(x,y)\notin I_0} \tfrac{1}{\varepsilon^2} \gamma_{x,y}(\varepsilon)\inp{A^*\bar\xi-c}{e_{x,y}}^2 + \tfrac{1}{\varepsilon} \lambda_{\max}(\varepsilon)\|  \xi(\varepsilon)-\bar\xi\|^2 \\
   &\lesssim \frac{1}{\varepsilon^2}\max\limits_{(x,y) \notin I_0} \gamma_{x,y}(\varepsilon) + \varepsilon ,
\end{align*}
where the last inequality is obtained using that \(\sum_{(x,y)\notin I_0} \inp{A^*\bar\xi -c}{e_{x,y}}^2\) and \(\lambda_{\max}(\varepsilon)\), the largest eigenvalue of \( D^2\mathrm{F}^*\!(-\xi(\varepsilon))\), are bounded quantities. From this, for any \((x,y) \in \mathcal{X}\!\times\!\mathcal{Y}\), one can obtain 
\[ \gamma_{x,y}(\varepsilon)\inp{A^*\dot{\xi}(\varepsilon) - \log \gamma(\varepsilon)}{e_{x,y}}^2 \lesssim \frac{1}{\varepsilon^2} \max_{(x,y)\notin I_0} \gamma_{x,y}(\varepsilon) + \varepsilon.\] 
Moreover, we compute
\begin{align*}
\frac{\textit{d}}{\textit{d}\varepsilon}\left(\varepsilon\log \gamma(\varepsilon)\right) &= \log \gamma(\varepsilon) + \varepsilon \frac{\dot{\gamma}(\varepsilon)}{\gamma(\varepsilon)} \\
&= \frac{1}{\varepsilon}( A^*\xi(\varepsilon)-c)  + \varepsilon\frac{(\tfrac{1}{\varepsilon}A^*\dot{\xi}(\varepsilon) -\tfrac{1}{\varepsilon^2} (A^*\xi(\varepsilon)-c))\gamma(\varepsilon)}{\gamma(\varepsilon)}\\
&= A^*\dot{\xi}(\varepsilon),
\end{align*}
and in particular \( A^*\dot{\xi}(t) - \log\gamma(\varepsilon)=\varepsilon \frac{\dot{\gamma}(\varepsilon)}{\gamma(\varepsilon)}\). Thus, we have
\begin{align*}
   & &
   \gamma_{x,y}(\varepsilon)\left(\varepsilon \frac{\dot{\gamma}_{x,y}(\varepsilon)}{\gamma_{x,y}(\varepsilon)}\right)^2 &\lesssim \frac{1}{\varepsilon^2}\max_{(x,y)\notin I_0} \gamma_{x,y}(t) + \varepsilon \\
    &\iff&
    \varepsilon^2 \frac{\dot{\gamma}_{x,y}(\varepsilon)^2}{\gamma_{x,y}(t)} &\lesssim \frac{1}{\varepsilon^2} \max_{(x,y)\notin I_0}\gamma_{x,y}(\varepsilon) + \frac{1}{\varepsilon}  \\
    &\implies&
    \dot{\gamma}_{x,y}(\varepsilon)^2 &\lesssim \frac{1}{\varepsilon^4} \max_{(x,y)\notin I_0}\gamma_{x,y}(\varepsilon) + \frac{1}{\varepsilon},
\end{align*}
where the last inequality uses that \(\varepsilon \mapsto \gamma(\varepsilon)\) is bounded when \(\varepsilon\) is sufficiently small. Since \(d(\varepsilon)\) is bounded for
\(\varepsilon\) small enough and recalling that the transport coupling can be expressed in terms of the dual variable as 
\[\gamma(\varepsilon)= e^{\tfrac{1}{\varepsilon}(A^*\xi(\varepsilon)  - c)} = e^{\tfrac{1}{\varepsilon} A^*(\xi(\varepsilon)-\bar\xi)- \tfrac{1}{\varepsilon}\kappa} = e^{ A^*d(\varepsilon) - \tfrac{1}{\varepsilon}\kappa},\]
it follows that there exists \(C>0\) such that
\[\forall (x,y) \notin I_0, \quad \gamma_{x,y}(\varepsilon) \leq Ce^{-\tfrac{1}{\varepsilon}\kappa^*}, \]
with \(\kappa^*=\min_{(x,y)\notin I_0} \kappa_{x,y}\).
It allows to conclude that for any \((x,y) \notin I_0\), \(\gamma_{x,y}(\varepsilon)\) converges exponentially fast towards \(0\). Consequently, we obtain the asymptotic estimate
\[\abs{\dot{\gamma}_{x,y}(\varepsilon)} \lesssim \frac{1}{\sqrt{\varepsilon}}. \]
Finally, we have
\begin{align*}
\| \gamma(\varepsilon) - \bar\gamma \| &\leq \int_{0}^{\varepsilon} \|\dot{\gamma}(\omega) \| \dd\omega \lesssim \int_{0}^{\varepsilon} \frac{1}{\sqrt{\omega}} \,\dd\omega \lesssim \sqrt{\varepsilon}.
\end{align*}
which allows to obtain 
\[ \| \gamma(\varepsilon) - \bar\gamma \| \lesssim \sqrt{\varepsilon}.\qedhere\]
\end{proof}

\section{Numerical experiments}\label{sec:numerics}

We consider two finite sets \(\mathcal{X}, \mathcal{Y} \subseteq \R^d\) with cardinality \(N\) and \(M\), respectively and  divergences which take the form \(\mathrm{F}(\cdot) = \mathrm{D}_\phi(\cdot \vert q)\) where \(q\) represents the marginals and \(\phi\) is an entropy function. \\

For each divergence \(\mathrm{F}\) considered in this section, we provide numerical illustrations of the expected upper bounds on the convergence rates for both the primal and the dual problem.
We consider two different datasets. 
The first dataset is composed of two discretized Gaussian measures on a regular 1-dimensional grid, also with unequal masses: \(|\mu| = 11\) and \(|\nu| = 10\). 
The second and third datasets consist of two point clouds \(\mathcal{X}\) and \(\mathcal{Y}\) with uniform weights, respectively of dimension \(2\) and \(3\). Notably, \(\mathcal{Y}\) contains outliers points that are significantly distant from the others. The measures supported on these sets have different total masses, with \(|\mu| = 11\) and \(|\nu| = 10\).

The convergence rate plots are presented on a logarithmic scale and are compared to their theoretically expected rates.

\begin{figure}[H]
    \centering
    \begin{subfigure}{0.33\textwidth}
        \centering
        \includegraphics[width=\linewidth]{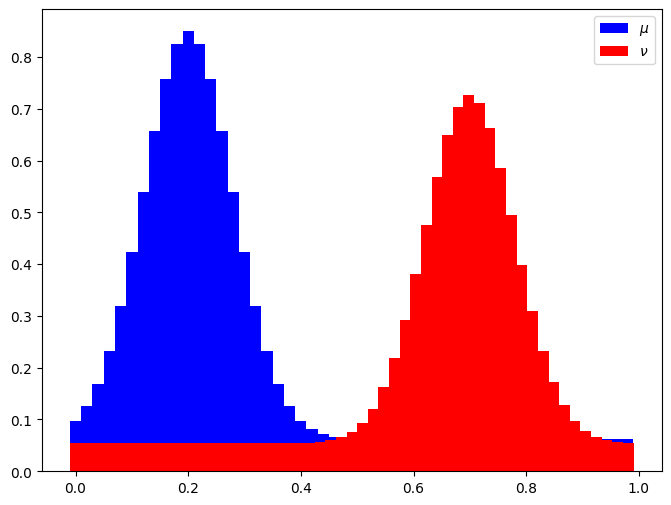}
    \end{subfigure}
    \hfill
    \begin{subfigure}{0.33\textwidth}
        \centering
        \includegraphics[width=\linewidth]{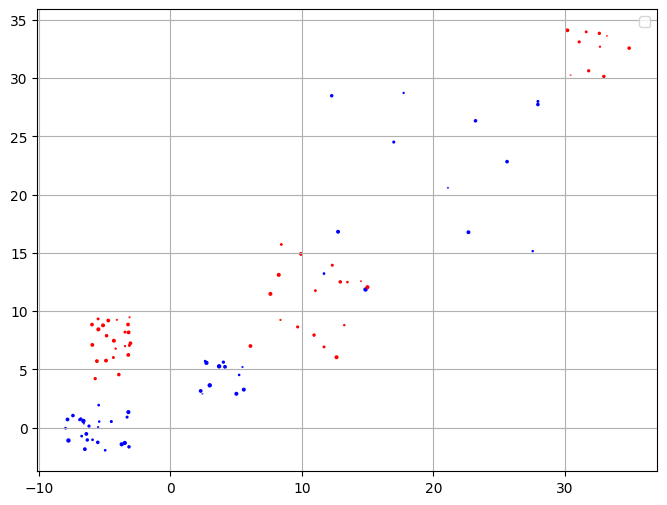}
    \end{subfigure}
    \hfill
    \begin{subfigure}{0.3\textwidth}
        \centering
        \includegraphics[width=\linewidth]{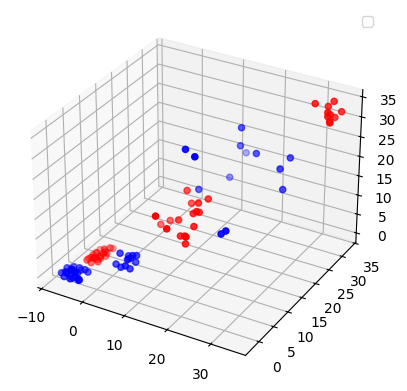}
    \end{subfigure}
    \caption{Three Datasets. The left panel shows two discretized Gaussian measures. The center panel shows two weighted point clouds in 2D. The right panel shows two weighted point clouds in 3D.}
\end{figure}

\subsection{Kullback-Leibler divergence}
The most widely used case corresponds to the function \(\phi\) defined as \(\phi(x) \defeq x(\log x - 1)\) on \(\R_+\) and \(+\infty\) otherwise. It gives the so-called Kullback-Leibler divergence, defined for any \(\psi \in \R^{N\!+\!M}\) by
\[
\mathrm{F}(\psi) = \begin{dcases*} \sum_{i=1}^{N\!+\!M} \psi_i \left( \log\frac{\psi_i}{q_i} - 1 \right) & if \(\psi_i \geq 0\; (\forall i)\) \\
+\infty & otherwise.
\end{dcases*}
\]
The Legendre transform of \(\phi\) is given for any \(y \in \R\) by
\[
\phi^*(y) = e^{y}.
\]
It follows that \(\phi^*\) is a \(\mathcal{C}^2\) function on \(\R_+^*\) that is strongly convex at every point of its domain.

\begin{figure}[H]
    \centering
    \begin{subfigure}{0.4\textwidth}
        \centering
        \includegraphics[width=\linewidth]{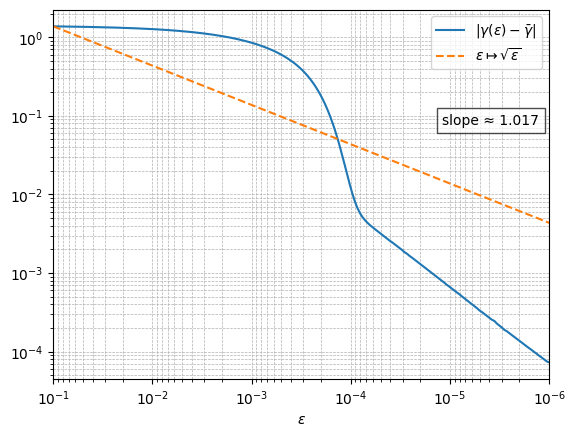}
    \end{subfigure}
    \hfill
    \begin{subfigure}{0.4\textwidth}
        \centering
        \includegraphics[width=\linewidth]{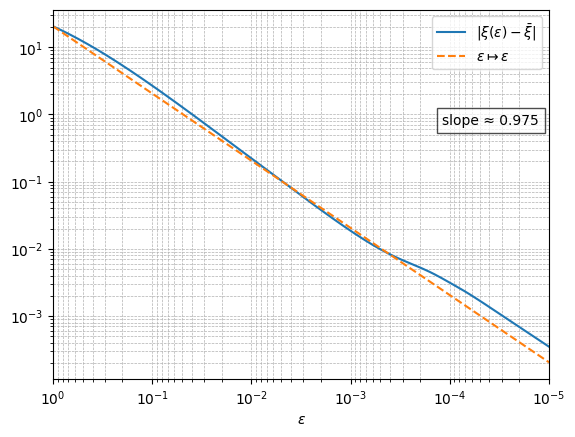}
    \end{subfigure}
    \caption{ \(\F(\cdot) = \mathrm{D}_\phi(\cdot\vert q)\) with \(\phi(t)=t(\log t - 1)\), \(\mu\) and \(\nu\) are discretized Gaussian measures. The left plot (log scale) represents the behavior of \(\varepsilon \mapsto \lVert \gamma(\varepsilon) - \bar\gamma \rVert\) (blue) compared to the expected rate (orange). The right plot (log scale) represents \(\varepsilon \mapsto \lVert \xi(\varepsilon) - \bar\xi \rVert\) (blue) compared to the expected rate (orange).}
\end{figure}

\begin{figure}[H]
    \centering
    \begin{subfigure}{0.4\textwidth}
        \centering
        \includegraphics[width=\linewidth]{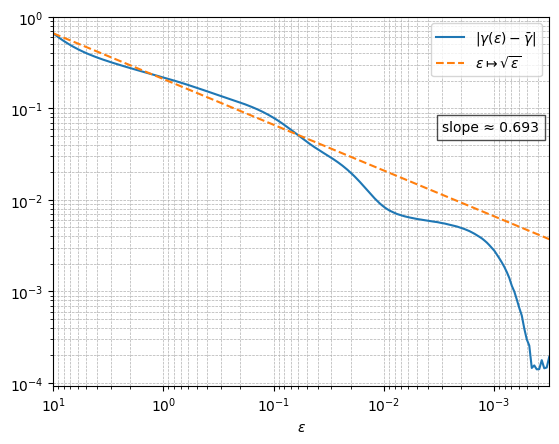}
    \end{subfigure}
    \hfill
    \begin{subfigure}{0.4\textwidth}
        \centering
        \includegraphics[width=\linewidth]{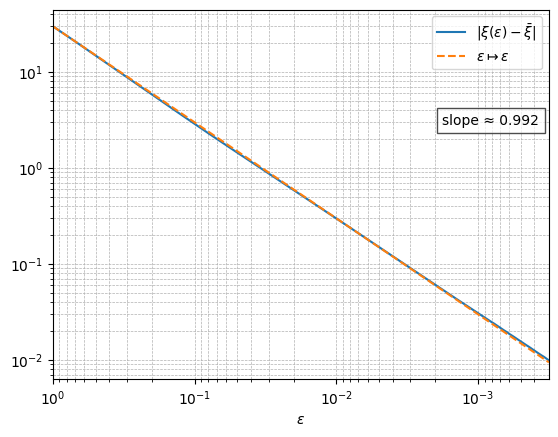}
    \end{subfigure}
    \caption{ \(\F(\cdot) = \tau \mathrm{D}_\phi(\cdot\vert q)\) with \(\phi(t)=t(\log t - 1)\) and \(\tau=10\), \(\mu\) and \(\nu\) are measures supported on \(2\)D point clouds. The left plot (log scale) represents the behavior of \(\varepsilon \mapsto \lVert \gamma(\varepsilon) - \bar\gamma \rVert\) (blue) compared to the expected rate (orange). The right plot (log scale) represents \(\varepsilon \mapsto \lVert \xi(\varepsilon) - \bar\xi \rVert\) (blue) compared to the expected rate (orange).}
\end{figure}

\begin{figure}[H]
    \centering
    \begin{subfigure}{0.4\textwidth}
        \centering
        \includegraphics[width=\linewidth]{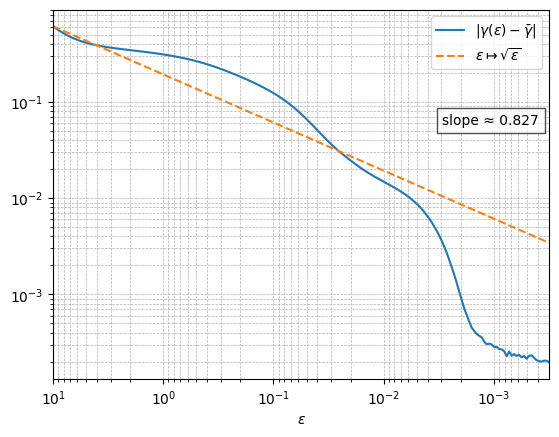}
    \end{subfigure}
    \hfill
    \begin{subfigure}{0.4\textwidth}
        \centering
        \includegraphics[width=\linewidth]{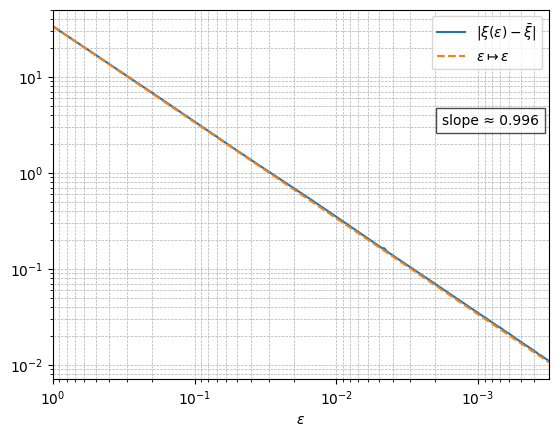}
    \end{subfigure}
    \caption{ \(\F(\cdot) = \tau\mathrm{D}_\phi(\cdot\vert q)\) with \(\phi(t)=t(\log t - 1)\) and and \(\tau=10\), \(\mu\) and \(\nu\) are measures supported on \(3\)D point clouds. The left plot (log scale) represents the behavior of \(\varepsilon \mapsto \lVert \gamma(\varepsilon) - \bar\gamma \rVert\) (blue) compared to the expected rate (orange). The right plot (log scale) represents \(\varepsilon \mapsto \lVert \xi(\varepsilon) - \bar\xi \rVert\) (blue) compared to the expected rate (orange).}
\end{figure}

We observe that the theoretical rate appears to be sharp for the dual problem, as indicated by the corresponding slopes when $\varepsilon$ is sufficiently small. In contrast, for the primal problem, the observed convergence seems to exceed the theoretical prediction, suggesting a faster rate.

\subsection{Distance to the marginals}

In the same framework as previously defined, another divergence function found in the literature is based on the Euclidean norm distance to \(q\). Taking the entropy function defined by \(\phi(x)\defeq \frac{1}{2}\vert x- 1\vert ^2 \) on \(\R^{N\!+\!M}\) leads for any \(\psi \in \R^{N\!+\!M}\) to the following divergence: 
\[
\mathrm{F}(\psi) = \frac{1}{2}\| \psi - q \|_2^2.
\]
The Legendre transform of \(\phi\) is given for any \(y \in \R\) as:
\[
\phi^*\!(y) = \frac{1}{2} y^2 + y,
\]
which is \(\mathcal{C}^2\) and strongly-convex on \(\R\).

\begin{figure}[H]
    \centering
    \begin{subfigure}{0.4\textwidth}
        \centering
        \includegraphics[width=\linewidth]{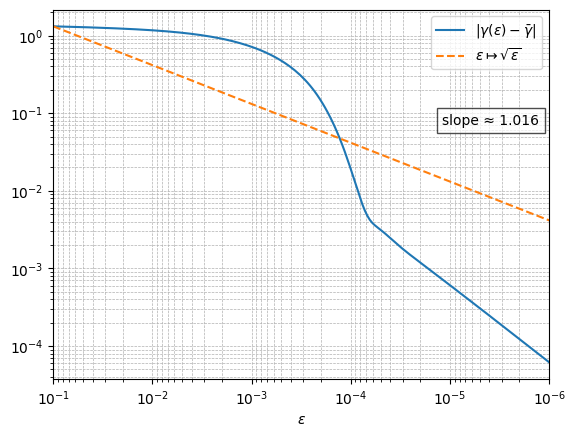}
    \end{subfigure}
    \hfill
    \begin{subfigure}{0.4\textwidth}
        \centering
        \includegraphics[width=\linewidth]{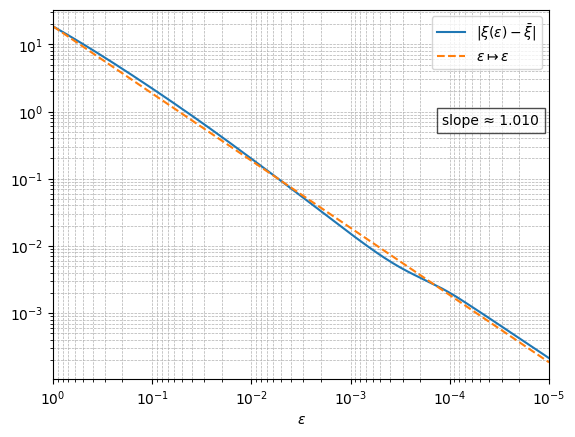}
    \end{subfigure}
   \caption{ \(\F(\cdot) = \mathrm{D}_\phi(\cdot\vert q)\) with \(\phi(t)=\frac{1}{2}\lvert t-1\rvert^2\), \(\mu\) and \(\nu\) are discretized Gaussian measures. The left plot (log scale) represents the behavior of \(\varepsilon \mapsto \lVert \gamma(\varepsilon) - \bar\gamma \rVert\) (blue) compared to the expected rate (orange). The right plot (log scale) represents \(\varepsilon \mapsto \lVert \xi(\varepsilon) - \bar\xi \rVert\) (blue) compared to the expected rate (orange).}
\end{figure}

\begin{figure}[H]
    \centering
    \begin{subfigure}{0.4\textwidth}
        \centering
        \includegraphics[width=\linewidth]{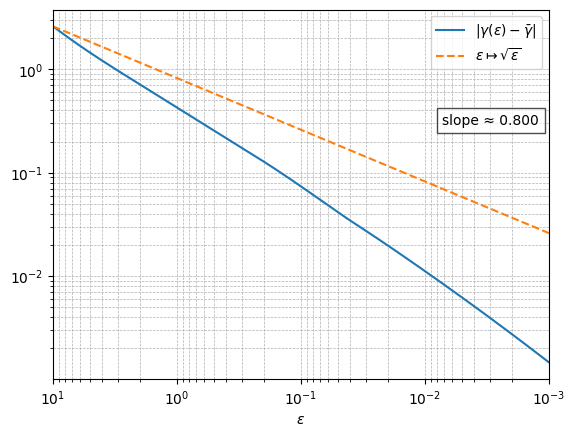}
    \end{subfigure}
    \hfill
    \begin{subfigure}{0.4\textwidth}
        \centering
        \includegraphics[width=\linewidth]{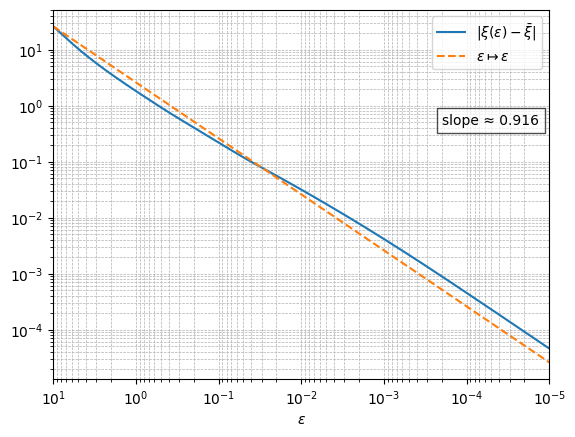}
    \end{subfigure}
    \caption{ \(\F(\cdot) = \mathrm{D}_\phi(\cdot\vert q)\) with \( \phi(t)=\frac{1}{2}\lvert t-1\rvert^2\), \(\mu\) and \(\nu\) are measures supported on \(2\)D point clouds. The left plot (log scale) represents the behavior of \(\varepsilon \mapsto \lVert \gamma(\varepsilon) - \bar\gamma \rVert\) (blue) compared to the expected rate (orange). The right plot (log scale) represents \(\varepsilon \mapsto \lVert \xi(\varepsilon) - \bar\xi \rVert\) (blue) compared to the expected rate (orange).}
\end{figure}

\begin{figure}[H]
    \centering
    \begin{subfigure}{0.4\textwidth}
        \centering
        \includegraphics[width=\linewidth]{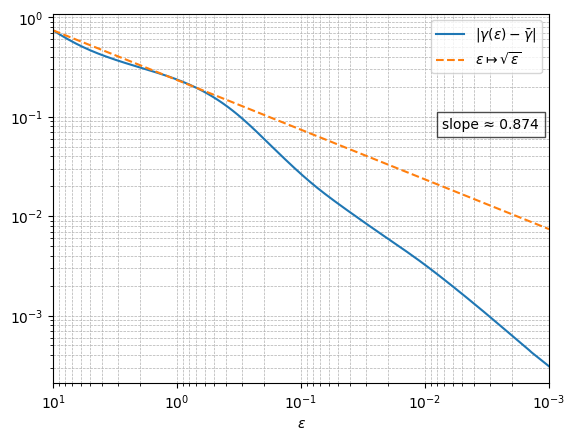}
    \end{subfigure}
    \hfill
    \begin{subfigure}{0.4\textwidth}
        \centering
        \includegraphics[width=\linewidth]{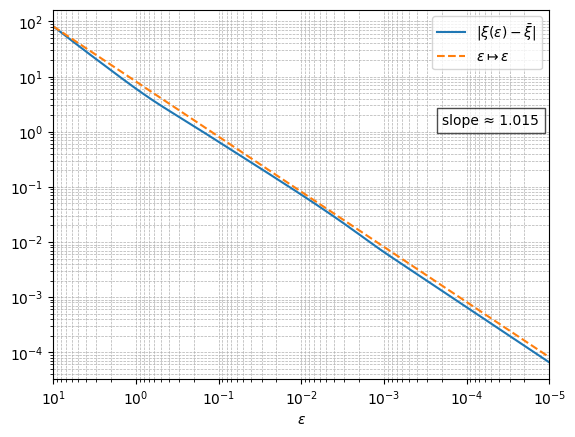}
    \end{subfigure}
    \caption{ \(\F(\cdot) = \mathrm{D}_\phi(\cdot\vert q)\) with \( \phi(t)=\frac{1}{2}\lvert t-1\rvert^2\), \(\mu\) and \(\nu\) are measures supported on \(2\)D point clouds. The left plot (log scale) represents the behavior of \(\varepsilon \mapsto \lVert \gamma(\varepsilon) - \bar\gamma \rVert\) (blue) compared to the expected rate (orange). The right plot (log scale) represents \(\varepsilon \mapsto \lVert \xi(\varepsilon) - \bar\xi \rVert\) (blue) compared to the expected rate (orange).}
\end{figure}

In this example, we see that the theoretical rate remains sharp for the dual problem, but once again does not accurately describe the behavior of the primal problem.

\smallskip

\noindent{\textbf{Acknowledgments.}} The authors acknowledge  the support of the FMJH Program PGMO and the ANR project GOTA (ANR-23-CE46-0001).

\printbibliography

\end{document}